\newtheorem{lemma}{LEMMA}[section]
\newtheorem{proposition}[lemma]{PROPOSITION}
\newtheorem{corollary}[lemma]{COROLLARY}
\newtheorem{theorem}[lemma]{THEOREM}
\newtheorem{remark}[lemma]{REMARK}
\newtheorem{example}[lemma]{EXAMPLE}
\newcommand{\real}{\mathbbm{R}}
\newcommand{\nat}{\mathbbm{N}}
\renewcommand{\a}{\alpha}
\renewcommand{\b}{\beta}
\newcommand{\g}{\gamma}
\newcommand{\vp}{\varphi}
\newcommand{\ve}{\varepsilon}
\newcommand{\reald}{{\real^d}}
\newcommand{\on}{\quad\text{ on }}
\newcommand{\und}{\quad\mbox{ and }\quad}
\newcommand{\inv}{^{-1}}
\newcommand{\ov}{\overline}
\newcommand{\V}{\mathcal V}  
\newcommand{\W}{\mathcal W}  
\newcommand{\N}{\mathcal N}  
\newcommand{\C}{\mathcal C}  
\newcommand{\F}{\mathcal F}
\renewcommand{\H}{{\mathcal H}}
\newcommand{\B}{\mathcal B}
\newcommand{\M}{\mathcal M}
\newcommand{\A}{\mathcal A}
\newcommand{\U}{{\mathcal U}}
\newcommand{\J}{{\mathcal J}}
\newcommand{\supp}{\operatorname*{supp}}
\newcommand{\fliminf}{\mbox{f-}\!\liminf}
\newcommand{\itemframe}%
{\setlength{\parskip}{10pt}\begin{enumerate} \setlength{\topsep}{10pt}%
\setlength{\itemsep}{15pt}\setlength{\parsep}{5pt}}
\newcommand{\vx}{\ve_x}
\newcommand{\Px}{\mathcal P(X)}
\newcommand{\uc}{{U^c}}
\newcommand{\vc}{{V^c}}
\newcommand{\h}{{}^\ast\H}
\newcommand{\schluss}{\end{frame}\end{document}}
\newcommand{\bu}{\B^\ast}
\newcommand{\bup}{\widetilde{\B^\ast}}
\newcommand{\bp}{\widetilde{\B }}
 \title{Nearly hyperharmonic functions\\ and Jensen measures}
\author {Wolfhard Hansen and  Ivan Netuka}
\begin{document}

\maketitle

  \begin{abstract}
Let $(X,\H)$ be a $\mathcal P$-harmonic space and assume for simplicity
that constants are harmonic. Given 
a numerical function $\vp$ on $X$ which is locally lower bounded  let 
\begin{equation*} 
     J_\vp(x):=\sup\{\int^\ast \vp\,d\mu(x)\colon \mu\in \mathcal J_x(X)\},
\qquad x\in X,
\end{equation*} 
where $\mathcal J_x(X)$ denotes the set of all  Jensen measures $\mu$ for $x$, that is,
$\mu$ is a~compactly supported measure on $X$ satisfying $\int u\,d\mu\le u(x)$
for every hyperharmonic function on $X$. The main purpose of the paper is
to show that, 
assuming quasi-universal measurability of $\vp$, 
the function  $J_\vp$ is the smallest nearly hyperharmonic function majorizing 
$\vp$ and that $J_\vp=\vp\vee \hat J_\vp$, where $\hat J_\vp$ is the lower semicontinuous 
regularization of $J_\vp$. So, in particular, $J_\vp$ turns out to be at least ``as measurable as'' $\vp$.

This improves recent results, where the axiom of polarity was assumed. The preparations
about nearly hyperharmonic functions on balayage spaces are closely related to the study
of strongly supermedian functions triggered by J.-F.\ Mertens more than forty years ago.

 Keywords:   Jensen measure; nearly hyperharmonic function; strongly supermedian function.

MSC: 31B05, 31D05, 60J45, 60J75. 
\end{abstract}

\section{Representing measures for positive hyperharmonic functions}

Let $(X,\W)$ be a balayage space  ($X$ a locally compact space with countable base   and $\W$ the set of 
positive hyperharmonic functions on $X$, see \cite{BH} or \cite{H-course}). The fine topology on $X$
(it is finer than the initial topology) is the coarsest topology such that all functions in $\W$ are continuous.
Let~$\C(X)$ denote the set of all continuous real functions on $X$, and let us fix a strictly positive 
function $u_0\in \W\cap \C(X)$.
Further, let $\Px$
be the set of all continuous real potentials, that is, functions $p\in\W\cap\C(X)$ such that 
$p/v$ vanishes at infinity for some strictly positive $v\in \W\cap \C(X)$. We shall say that 
a numerical function~$\vp$ on~$X$ is $\mathcal P$-bounded, if $|\vp|\le p$ for some $p\in \Px$;
the set of all $\mathcal P$-bounded functions in~$\C(X)$ will be denoted by $\C_{\mathcal P}(X)$.
For every numerical function $\vp$ on $X$, let $\hat\vp$ denote 
its lower semicontinuous regularization, 
that is, $\hat \vp(x):=\liminf_{y\to x}\vp(y)$ for every  $x\in X$. If $\V$ is a subset of $\W$ and $v:=\inf \V$,
then $\hat v\in \W$ and $\hat v(x)=\hat v^f(x):=\fliminf_{y\to x} v(x)$, $x\in X$ (lower limit with respect to the fine topology).

We recall that, for every numerical function $\vp\ge 0$,   a reduced function~$R_\vp$ and a~swept function $\hat R_\vp$ are  defined by
 \begin{equation}\label{def-red}
      R_\vp:=\inf\{u\in \W\colon u\ge \vp\} \und \hat R_\vp:=\widehat {R_\vp}.
\end{equation} 
In particular,  we have $R_v^A:=R_{v1_A}\le v$ and $\hat R_v^A:=\hat R_{v1_A}\le R_v^A$ for $A\subset X$ and $v\in \W$, 
which leads to reduced measures~$\vx^A$ and swept measures $\hat\ve_x^A$, $x\in X$, characterized by $\int v\,d\vx^A=R_v^A(x)$
and $\int v\,d\hat\ve_x^A=\hat R_v^A(x)$, $v\in \W$.
Let us observe that $\vx^A =\hat\ve_x^A$ for every $x\in A^c$, since $\hat R_v^A=R_v^A$ on $A^c$
(see \cite[VI.2.4]{BH}).
If $x\in A$, then  $\vx^A=\vx$ and, by~\cite[VI.9.2]{BH},
$\hat \ve_x^A=\hat\ve_x^A(\{x\})\vx+(1-\hat\ve_x^A(\{x\})\vx^{A\setminus \{x\}}$.

For every $x\in X$, let $\M_x(\W)$ denote the convex set of all 
\emph{representing measures for $x$ with respect to $\W$},
that is, (positive Radon) measures $\mu$ on $X$ such that, for every $w\in\W$,
\begin{equation}\label{def-mxw} 
\int w\,d\mu\le w(x).
\end{equation} 
Since every function in $\W$ is an increasing limit of a sequence in $\Px$, (\ref{def-mxw}) holds for functions 
in $\W$, if it holds for functions in $\Px$.  
Let $\B$, $\bu$ respectively  denote the   
 $\sigma$-algebra of all Borel, ($\B$-)universally measurable  sets in $X$.
By \cite[VI.12.5, 2.2, 4.3, 4.4]{BH}, 
\begin{equation}\label{ext-M}
        E:=\{\vx^A\colon A\subset X\}=\{\vx\}\cup \{\vx^A\colon A\in \B,\  A \mbox{ finely closed, } x\notin A\}
\end{equation} 
is the set of extreme points of $\M_x(\W)$. The set 
 $\M_x(\W)$ is  weak$^\ast$-compact, that is, for every sequence $(\mu_m)$
in $\M_x(\W)$, there exists a subsequence $(\mu_{m_k})$ and $\mu\in\M_x(\W)$ 
such that $\lim_{k\to\infty} \int f\,d\mu_{m_k}=\int f\,d\mu$ for every $f\in \C_{\mathcal P}(X)$
(see \cite[VI.10.1]{BH}). So we know by Choquet's theorem that, for every $\mu\in\M_x(\W)$, there exists
 a~probability measure $\rho$  on $E$ such that, for every $f\in\C_{\mathcal P}(X)$,
\begin{equation}\label{int-rep}
\int f\,d\mu=\int (\int f d\nu)\,d\rho(\nu),
\end{equation} 
and then (\ref{int-rep}) holds for every Borel measurable function $f\ge 0$ on $X$.
(We might note that, for a given $\mu$, the measure $\rho$ does not have to be unique; see \cite{HN-square}).

By definition, 
 a subset $P$ of $X$ is polar if $\hat R_{u_0}^P=0$. Every polar set $P$ is contained in a polar set in $\B$
(see \cite[VI.2.2]{BH}). 
Let $\tilde \B,\bup$  denote the $\sigma$-algebra of all sets  $A$ in $X$    
for which there exists a set $B$ in $\B,  \bu$ respectively such that the symmetric difference  $A\bigtriangleup B$ is polar.

If  $\mu\in \M_x(\W)$, $x\in X$, then $\mu$ does not charge polar sets $P$ in $X\setminus \{x\}$.
Indeed, given $\ve>0$, there exists a function $w\in \W$ such that 
$w=u_0$ on $P$ and $w(x)<\ve$, and we have $\int_P^\ast u_0\, d\mu\le \int w\,d\mu\le w(x)<\ve$
(cf.\ \cite[Corollary 1.8]{cole-ransford-subharmonicity}), whence $\mu^\ast(P)=0$.                 
So we know that $ \bup$ is contained in the completions of $\B$ with respect to the measures  $\mu\in \M_x(\W)$, $x\in X$.

\section{Nearly hyperharmonic positive functions}

Let $\U_c$ denote the set of all relatively compact open sets in $X$ and let us say that a~positive numerical function 
$u$ on $X$ is \emph{nearly hyperharmonic} if
\begin{equation}\label{def-nearly}
    \int^\ast u\,d\vx^\vc\le u(x)\qquad \mbox{ for all } V\in \U_c\mbox{ and }x\in X.
\end{equation} 
This generalizes the definition given  for harmonic spaces in \cite[Section II.1]{bauer66} and \cite[p.\ 119]{Const}). 
 As for harmonic spaces we easily obtain the following.

\begin{proposition}\label{nearly-proper} 
The  set $\N^+$  
of all  nearly hyperharmonic positive functions on $X$ has the following properties:
\begin{itemize} 
\item[\rm (i)] 
$\N^+$ is a convex cone containing $\W$. 
\item[\rm (ii)] 
For every $u\in\N^+$, $\hat u=\hat u^f\in\W$.
\item[\rm (iii)] 
If $(u_m)$ is a sequence in $\N^+$ and $u_m\uparrow u$, then $u\in\N^+$ and $\hat u_m\uparrow \hat u$.
\item[\rm(iv)] For every subset $\V$ of $\N^+$, $\inf \V\in \N^+$. 
\end{itemize} 
\end{proposition}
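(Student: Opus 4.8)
The plan is to obtain (i) and (iv) directly from elementary properties of the upper integral, to concentrate the real work on (ii), and then to derive (iii) from (ii) together with one fact about semipolar sets.

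\emph{Parts (i) and (iv).} For the cone property I would use subadditivity and positive homogeneity of the upper integral: for $\a,\b\ge0$ and $u_1,u_2\in\N^+$,
\[
\int^\ast(\a u_1+\b u_2)\,d\vx^\vc\le \a\int^\ast u_1\,d\vx^\vc+\b\int^\ast u_2\,d\vx^\vc\le \a u_1(x)+\b u_2(x).
\]
That $\W\subset\N^+$ is immediate: for $w\in\W$ the function $w$ is lower semicontinuous, so $\int^\ast w\,d\vx^\vc=\int w\,d\vx^\vc=R_w^\vc(x)\le w(x)$. For (iv), if $v=\inf\V$ with $\V\subset\N^+$, then $v\le u$ for each $u\in\V$ gives $\int^\ast v\,d\vx^\vc\le\int^\ast u\,d\vx^\vc\le u(x)$, and taking the infimum over $u\in\V$ yields $\int^\ast v\,d\vx^\vc\le v(x)$. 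Only monotonicity of the upper integral is needed.

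\emph{Part (ii), the crux.} First I would show $\hat u\in\W$. The function $\hat u$ is lower semicontinuous and positive, so by the harmonic-measure characterization of hyperharmonicity in \cite{BH} it suffices to prove $\int \hat u\,d\vx^\vc\le \hat u(x)$ for all $V\in\U_c$ and $x\in X$. For $x\notin V$ this is an equality, since $\vx^\vc=\vx$. For $x\in V$, pick $y_n\to x$ with $y_n\in V$ and $u(y_n)\to\hat u(x)$ (the case $\hat u(x)=\infty$ being trivial). For $w\in\W\cap\C(X)$ the map $y\mapsto\int w\,d\ve_y^\vc=R_w^\vc(y)=\hat R_w^\vc(y)$ is harmonic, hence continuous, on $V$, so $\ve_{y_n}^\vc\to\vx^\vc$ weak$^\ast$ (the measures are carried by the fixed compact $\ov V$ and have mass $\le1$ because constants are harmonic). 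Lower semicontinuity of $\mu\mapsto\int\hat u\,d\mu$ under weak$^\ast$ convergence of positive measures, applied to the lower semicontinuous $\hat u\ge0$, then gives
\[
\int\hat u\,d\vx^\vc\le\liminf_n\int\hat u\,d\ve_{y_n}^\vc\le\liminf_n\int^\ast u\,d\ve_{y_n}^\vc\le\liminf_n u(y_n)=\hat u(x),
\]
the middle steps using $\hat u\le u$ and $u\in\N^+$. Hence $\hat u\in\W$; in particular $\hat u$ is finely continuous and is the largest element of $\W$ dominated by $u$.

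\emph{The identity $\hat u=\hat u^f$ and Part (iii).} Since the fine topology is finer than the initial one, $\hat u\le\hat u^f\le u$. For the reverse inequality I would argue by contradiction: if $\hat u^f(x)>\hat u(x)$, then fine continuity of $\hat u\in\W$ produces a fine neighbourhood $U$ of $x$ on which $u>\hat u$, so the nonempty finely open set $U\sms\{x\}$ lies in $\{u>\hat u\}$; but $\{u>\hat u\}$ is semipolar for nearly hyperharmonic $u$ (a standard consequence of the minimum principle, cf.\ \cite{BH}) and semipolar sets have empty fine interior, a contradiction. For (iii), monotone convergence of the upper integral along $u_m\uparrow u$ gives $\int^\ast u\,d\vx^\vc=\lim_m\int^\ast u_m\,d\vx^\vc\le u(x)$, so $u\in\N^+$. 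By (ii) each $\hat u_m\in\W$ and $w:=\sup_m\hat u_m\in\W$ (increasing limits stay in $\W$), with $w\le\hat u$. Writing $\hat u_m=u_m$ off a semipolar set $S_m$, one has $u\le w$ off the semipolar set $S:=\bigcup_m S_m$; since $S$ has empty fine interior and $w$ is finely continuous, the fine-liminf computation of the previous step yields $\hat u=\hat u^f\le w$, whence $\hat u_m\uparrow\hat u$.

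\emph{Main obstacle.} The delicate point is (ii): securing $\hat u(x)$, rather than $u(x)$, on the right-hand side forces a limit passage against the \emph{varying} harmonic measures $\ve_{y_n}^\vc$, which is exactly where weak$^\ast$ continuity of $y\mapsto\ve_y^\vc$ on $V$ and the lower semicontinuity of the integral of the lsc function $\hat u$ are indispensable. The fine-topology identity $\hat u=\hat u^f$ and the clause $\hat u_m\uparrow\hat u$ then rest on the semipolarity of $\{u>\hat u\}$, i.e.\ on the fact that semipolar sets have empty fine interior.
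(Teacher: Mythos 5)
The paper itself gives no proof of this proposition (it appeals to the classical harmonic--space arguments of Bauer and Constantinescu--Cornea), so your attempt must be measured against those standard arguments. Parts (i) and (iv) are correct and elementary, and your proof that $\hat u\in\W$ in (ii) follows a legitimate, essentially classical route: continuity of $y\mapsto\int f\,d\ve_y^\vc$ on $V$ together with lower semicontinuity of $\mu\mapsto\int\hat u\,d\mu$ under vague convergence. Two details there are inaccurate, though harmless: in a general balayage space $\ve_y^\vc$ is carried by $\vc$, not by the compact $\ov V$ (that is special to harmonic spaces, treated only in Section 4), and constants need not be harmonic, so ``mass $\le 1$'' is unjustified; both are repaired by controlling the $u_0$-masses $\int u_0\,d\ve_{y_n}^\vc\le u_0(y_n)$ and using vague convergence against compactly supported continuous functions.

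The genuine gap lies in your proofs of $\hat u=\hat u^f$ and of (iii): both hinge on the assertion that $\{\hat u<u\}$ is semipolar for \emph{every} $u\in\N^+$, which you describe as ``a standard consequence of the minimum principle, cf.\ [BH]''. The standard result, [BH, VI.5.11], yields semipolarity of $\{\widehat{\inf\V}<\inf\V\}$ only for $\V\subset\W$, and a general $u\in\N^+$ need not be an infimum of hyperharmonic functions. Worse, Proposition \ref{nh-semipolar} of this very paper shows that, for $u\in\N^+$, semipolarity of $\{\hat u<u\}$ is \emph{equivalent} to fine Borel measurability of $u$; no measurability whatsoever is assumed here, and the positive results in the literature (Mertens, Feyel, Beznea--Boboc) all require nearly Borel or nearly analytic functions. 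So you are assuming, for arbitrary $u\in\N^+$, a statement that the paper only establishes later under an extra hypothesis and by a nontrivial argument --- this is where your proofs of $\hat u=\hat u^f$ and of $\hat u_m\uparrow\hat u$ break down. Both facts admit direct proofs avoiding semipolarity: for instance, if $c<\hat u^f(x)$, then $W:=\{u>c\}$ is a fine neighbourhood of $x$, hence $W\cap V$ is non-thin at $x$ for $x\in V\in\U_c$, and the inequality $u(y)\ge\int^\ast u\,d\ve_y^{(W\cap V)\setminus\{y\}}$, combined with lower semicontinuity of $\hat R_{u_0}^{W\cap V}$ and $\hat R_{u_0}^{W\cap V}(x)=u_0(x)$, forces $\liminf_{y\to x}u(y)\ge c$; this is the route of [Const, p.\ 119], and (iii) then follows by a similar localization rather than by removing semipolar exceptional sets.
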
 

Given $\vp\colon X\to [0,\infty]$, let
\begin{equation}\label{def-N}
                      N_\vp:=\inf\bigl\{u\in \N^+\colon u\ge \vp\bigr\} \und \hat N_\vp:=\widehat{N_\vp}. 
\end{equation} 
 Proposition \ref{nearly-proper} immediately yields  the following.

\begin{proposition}\label{N-proper}
\begin{itemize} 
\item[\rm (i)] 
$N_\vp\le R_\vp$, and   $N_\vp$ is the smallest majorant of $\vp$~in~$\N^+$.
\item[\rm (ii)] 
If $ \vp_1,\vp_2,\dots\colon X\to [0,\infty]$ and 
  $\vp_m\uparrow \vp$, then     $      N_{\vp_m}\uparrow N_\vp$ and $\hat N_{\vp_m}\uparrow \hat N_\vp$.
\item[\rm (iii)] If $\vp$ is finely lower semicontinuous, then $N_\vp=R_\vp$.   
\end{itemize}
\end{proposition}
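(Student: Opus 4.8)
The plan is to read off all three assertions from the structural properties of $\N^+$ collected in Proposition~\ref{nearly-proper}, handling the parts in the order stated. None of them should require more than a direct combination of those four facts, so the work is mostly bookkeeping about which property is invoked where.

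For (i), since $\W\subseteq\N^+$ by Proposition~\ref{nearly-proper}(i), the infimum defining $N_\vp$ runs over a larger family than the one defining $R_\vp$, giving $N_\vp\le R_\vp$ at once. That $N_\vp$ is itself a majorant of $\vp$ lying in $\N^+$ is the point to check: the pointwise infimum of functions all dominating $\vp$ still dominates $\vp$, and membership $N_\vp\in\N^+$ is exactly the content of Proposition~\ref{nearly-proper}(iv). Minimality is then immediate, since any competitor $u\in\N^+$ with $u\ge\vp$ occurs in the defining infimum, whence $u\ge N_\vp$.

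For (ii), monotonicity $\vp_m\uparrow\vp$ forces $N_{\vp_m}\le N_{\vp_{m+1}}\le N_\vp$, so the increasing limit $u:=\sup_m N_{\vp_m}$ satisfies $u\le N_\vp$. Proposition~\ref{nearly-proper}(iii) supplies the reverse inequality: the $N_{\vp_m}$ form an increasing sequence in $\N^+$, hence $u\in\N^+$, and $u\ge N_{\vp_m}\ge\vp_m$ for every $m$ yields $u\ge\vp$; by the minimality established in (i) we get $u\ge N_\vp$, so $u=N_\vp$. The companion statement $\hat N_{\vp_m}\uparrow\hat N_\vp$ is then precisely the second conclusion of Proposition~\ref{nearly-proper}(iii) applied to this same sequence.

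The only part requiring a genuine (if short) argument is (iii), and I expect the passage to the fine-topology regularization to be the pivot. We already have $N_\vp\le R_\vp$ from (i), so it suffices to prove $N_\vp\in\W$; then $N_\vp$ is a majorant of $\vp$ in $\W$ and $R_\vp\le N_\vp$ follows. By Proposition~\ref{nearly-proper}(ii) we have $\hat N_\vp=\hat N_\vp^f\in\W$, that is, $\hat N_\vp(x)=\fliminf_{y\to x}N_\vp(y)$. Using $N_\vp\ge\vp$ together with the fine lower semicontinuity of $\vp$, one obtains $\hat N_\vp(x)\ge\fliminf_{y\to x}\vp(y)\ge\vp(x)$, so $\hat N_\vp$ is a majorant of $\vp$ lying in $\W\subseteq\N^+$. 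Minimality from (i) forces $\hat N_\vp\ge N_\vp$, while $\hat N_\vp\le N_\vp$ holds trivially; hence $N_\vp=\hat N_\vp\in\W$, and the proof is complete. The main obstacle is simply to recognize that part (ii) of Proposition~\ref{nearly-proper} replaces the ordinary lower semicontinuous regularization by the fine one, which is exactly what lets the fine lower semicontinuity hypothesis on $\vp$ enter the estimate.
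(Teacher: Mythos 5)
Your proposal is correct and is precisely the ``immediate'' derivation from Proposition~\ref{nearly-proper} that the paper invokes without writing out: (i) and (ii) from parts (i), (iii), (iv) of that proposition, and (iii) from the identity $\hat N_\vp=\hat N_\vp^f\in\W$ in part (ii) combined with the fine lower semicontinuity of $\vp$. Nothing to add.
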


For all functions $\vp\colon X\to  [0,\infty]$ and $x\in X$, let 
\begin{eqnarray*} 
      M_\vp(x)&:= &\sup\bigl\{\int^\ast \vp\,d\mu\colon \mu\in \M_x(\W)\bigr\},\\
      M_\vp'(x)&:= &\sup\bigl\{\int^\ast \vp\,d\vx^\vc\colon x\in V\in \U_c\bigr\},\\
      M_\vp''(x)&:= &\sup\bigl\{\int^\ast \vp\,d\vx^K\colon K\mbox{ compact   in }X\setminus \{x\}\bigr\},\\
      M_\vp'''(x)&:= &\sup\bigl\{\int^\ast \vp\,d\vx^A\colon A\in \B, \ A\mbox{ finely closed},\  x\notin A\}, 
\end{eqnarray*} 
where we may replace the upper integrals by integrals, if $\vp$ is $\bup$-measurable.

\begin{proposition}\label{M-equiv}
Let $\vp$ be a positive numerical function on $X$. Then
\begin{equation}\label{M-form-equiv}
M_\vp'=M_\vp''=M_\vp'''\le M_\vp.
\end{equation} 
If $\vp$ is $\bup$-measurable, then   $\vp\vee M_\vp'=M_\vp$.   
\end{proposition}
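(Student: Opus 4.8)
The plan is to prove the chain of equalities $M_\vp'=M_\vp''=M_\vp'''$ first, then establish the inequality $M_\vp'''\le M_\vp$, and finally handle the formula $\vp\vee M_\vp'=M_\vp$ under the measurability hypothesis. The equalities among the three primed quantities should follow by comparing the families of measures over which the suprema are taken, using the structural description of reduced and swept measures recalled in Section~1.

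First I would prove $M_\vp'=M_\vp''$. The key observation is that for $x\in V\in\U_c$, the measure $\vx^\vc$ is exactly the harmonic/swept measure on the complement of $V$, and $\vc$ can be exhausted from inside by compacts $K\subset X\setminus\{x\}$. Since $x\notin \vc$ is open-complement data, one checks $\vx^\vc=\hat\ve_x^\vc$ (because $\hat R_v^A=R_v^A$ on $A^c$, and here $x\in V=(\vc)^c$), so these reduced measures are supported off $x$. Running $V$ through a countable exhaustion and using the monotonicity and continuity properties from Propositions~\ref{nearly-proper} and~\ref{N-proper}, together with $\int^\ast\vp\,d\vx^K$ increasing as $K$ grows, should give both inequalities $M_\vp'\le M_\vp''$ and $M_\vp''\le M_\vp'$. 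For $M_\vp''=M_\vp'''$, every compact $K$ in $X\setminus\{x\}$ is a finely closed Borel set not containing $x$, giving $M_\vp''\le M_\vp'''$; conversely, for a finely closed Borel $A$ with $x\notin A$, one approximates $\vx^A$ by $\vx^K$ over compacts $K\subset A$, invoking the capacitary/approximation results behind~(\ref{ext-M}) and the fact that $E$ in~(\ref{ext-M}) is precisely indexed by such finely closed sets. The inequality $M_\vp'''\le M_\vp$ is immediate since each $\vx^A$ with $x\notin A$ lies in $\M_x(\W)$ (indeed in the extreme set $E$), so the supremum over this subfamily is dominated by the supremum over all of $\M_x(\W)$.

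For the final formula $\vp\vee M_\vp'=M_\vp$ when $\vp$ is $\bup$-measurable, the inequality $\vp\vee M_\vp'\le M_\vp$ is clear: $M_\vp'\le M_\vp$ from the chain just proved, and $\vp(x)=\int\vp\,d\vx\le M_\vp(x)$ since $\vx\in\M_x(\W)$. The substantive direction is $M_\vp\le \vp\vee M_\vp'$. Here I would use the Choquet integral representation~(\ref{int-rep}): given $\mu\in\M_x(\W)$, there is a probability measure $\rho$ on the extreme set $E$ with $\int\vp\,d\mu=\int(\int\vp\,d\nu)\,d\rho(\nu)$, valid for Borel $\vp\ge0$ and, via the $\bup$-measurability assumption together with the remark at the end of Section~1 that $\bup$ is contained in the $\mu$-completions of $\B$, for our $\vp$ as well. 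Since $\rho$ is a probability measure, $\int\vp\,d\mu$ is a weighted average of the values $\int\vp\,d\nu$ with $\nu\in E$; by~(\ref{ext-M}) each such $\nu$ is either $\vx$ (contributing $\int\vp\,d\vx=\vp(x)$) or a swept measure $\vx^A$ with $A$ finely closed Borel and $x\notin A$ (contributing at most $M_\vp'''(x)=M_\vp'(x)$). Hence every extreme contribution is $\le \vp(x)\vee M_\vp'(x)$, and therefore so is the average $\int\vp\,d\mu$; taking the supremum over $\mu$ yields $M_\vp(x)\le\vp(x)\vee M_\vp'(x)$.

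The main obstacle I anticipate is the measure-theoretic bookkeeping in applying~(\ref{int-rep}) to the non-Borel function $\vp$: the representation is stated for Borel $\vp\ge0$, so I must justify its extension to $\bup$-measurable $\vp$. This rests on the fact, noted at the close of Section~1, that every $\mu\in\M_x(\W)$ assigns measure zero to polar sets in $X\setminus\{x\}$ and that $\bup$ sits inside the $\mu$-completion of~$\B$; one then replaces $\vp$ by a Borel function agreeing with it off a polar set and checks that the exceptional polar set is $\nu$-null for $\rho$-almost every extreme $\nu$. Verifying this ``almost every $\nu$'' statement — essentially that $\rho$ does not concentrate mass on extreme measures charging the bad polar set — is the delicate point, and I expect it to require a Fubini-type argument combined with the fact that $\int^\ast$ of a polar-supported indicator against $\mu$ vanishes.
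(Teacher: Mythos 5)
Your overall architecture matches the paper's, and your treatment of the final identity $\vp\vee M_\vp'=M_\vp$ via the Choquet representation over the extreme set $E$ is essentially the paper's argument. (The paper sidesteps your ``delicate point'' more cleanly: rather than modifying $\vp$ off a polar set --- which is problematic because measures in $\M_x(\W)$ are only guaranteed not to charge polar sets contained in $X\setminus\{x\}$ --- it first reduces to $\int\vp\,d\mu<\infty$ by truncating with $\vp\wedge(mu_0)$, then sandwiches $\vp$ between Borel functions $f\le\vp\le g$ with $\int f\,d\mu=\int g\,d\mu$ and deduces $\int f\,d\nu=\int\vp\,d\nu$ for $\rho$-a.e.\ $\nu\in E$.)

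The genuine gap is the inequality $M_\vp''\le M_\vp'$, which is the technical heart of the proposition and which your sketch does not prove. Exhausting $\vc$ by compacts can only give $M_\vp'\le M_\vp''$; for the converse you must start from a compact $K\subset X\setminus\{x\}$ and produce a relatively compact open $V\ni x$ with $\int^\ast\vp\,d\vx^{\vc}$ nearly as large as $\int^\ast\vp\,d\vx^{K}$, and taking $V$ small and disjoint from $K$ does not work because $\vx^{\vc}$ is then spread over all of $\vc$ rather than concentrated near $K$. Moreover your auxiliary claim that $\int^\ast\vp\,d\vx^K$ increases with $K$ is false for general positive $\vp$: it holds for $\vp\in\W$, but enlarging $K$ can move the mass of $\vx^K$ away from the set where $\vp$ is large. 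The paper's device is to truncate to $\vp':=\vp\wedge q$ with $q=mp$ a continuous potential so that $\int^\ast\vp'\,d\vx^K$ stays close to $\int^\ast\vp\,d\vx^K$, choose a relatively compact open neighborhood $U$ of $\{x\}\cup K$ with $\int q\,d\vx^{\uc}$ small, set $V:=U\setminus K$, and use the decomposition $\vx^K=1_K\vx^{\vc}+\nu^K$ with $\nu:=1_{\uc}\vx^{\vc}$ (from \cite[VI.9.4]{BH}) to bound the discrepancy by $\int q\,d\nu^K\le\int q\,d\vx^{\uc}$. Without this (or an equivalent) construction the equality $M_\vp'=M_\vp''$ is unproved. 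A smaller repair: your step $M_\vp'''\le M_\vp''$ needs the precise inequality $1_K\vx^A\le\vx^K$ for compact $K\subset A$ rather than a vague ``approximation,'' since $\vp$ need not be continuous and weak convergence of measures would not control the upper integrals; this, however, is a citable fact.
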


\begin{proof} Of course, $M_\vp'\vee M_\vp''\le M_\vp'''\le M_\vp$. Let us fix $x\in X$.

Let  $A$ be a~finely closed Borel set, $x\notin A$,  and $b< 
\int^\ast \vp\,d\vx^A$. By~\cite[VI.4.6]{BH}, $\vx^A$~is supported by $A$. So there exists a compact $K$ in $A$
such that $\int^\ast 1_K \vp\,d\vx^A>b$. By \cite[VI.9.4]{BH}, $1_K\vx^A\le \vx^K$,    hence
 $b<\int^\ast 1_K \vp\,d\vx^A\le \int^\ast \vp\,d\vx^K\le M_\vp''(x)$. Thus $M_\vp'''(x)\le M_\vp''(x)$.

Next let $K$ be a compact set in $X\setminus \{x\}$, $b<c< \int^\ast \vp\,d\vx^K$ and 
 $p\in \Px$, $p>0$. Then there exists $m\in\nat$ such that $\vp':=\vp\wedge (mp)$ satisfies
$\int^\ast \vp'\,d\vx^K>c$. Since $q:=mp$ is a~potential, there exists, by \cite[II.5.2]{BH},
 a relatively compact open neighborhood  $U$ 
of $\{x\}\cup K$ such that $\int q\,d\vx^\uc<c-b$. Let us define $V:=U\setminus K$ and 
$\nu:=1_\uc \vx^\vc$. By \cite[VI.9.4]{BH},  
\begin{equation*}
\nu\le \vx^\uc \und                     \vx^K=1_K\vx^\vc +\nu^K.
\end{equation*} 
Therefore
\begin{equation*} 
 \int q\,d\nu^K\le \int q\,d\nu\le  \int q\,d\vx^\uc<c-b \mbox{ \  and \ } 
c< \int^\ast \vp'\,d\vx^K\le \int^\ast\vp \,d\vx^\vc+ \int q\,d\nu^K .
\end{equation*}  
So $b<\int^\ast \vp \,d\vx^\vc $, and we conclude that $M_\vp''(x)\le M_\vp'(x)$ completing the proof of 
the equalities in (\ref{M-form-equiv}).

Finally, we suppose that $\vp$ is $\bup$-measurable and fix $\mu\in \M_x(\W)$.
Let us  assume for the moment that $\int \vp\,d\mu<\infty$.
There exist positive Borel measurable functions $f,g$ on $X$
such that $f\le \vp \le g$ and 
\begin{equation}\label{fug}
\int f \,d\mu=\int \vp\,d\mu=\int g\,d\mu.
\end{equation} 
Using the integral representation (\ref{int-rep}), we see that $\int f\,d\nu=\int g\,d\nu$ 
for $\rho$-a.e.\ $\nu\in E$, and hence 
\begin{equation*}
                  \int f\,d\nu=\int \vp\,d\nu\le \vp(x)\vee M_\vp'''(x)\qquad\mbox{  for $\rho$-a.e.\ $\nu\in E$}.
\end{equation*} 
  Thus $\int \vp\,d\mu=\int f\,d\mu\le \vp(x)\vee M_\vp'''(x)$, by (\ref{int-rep}) and (\ref{fug}). 
In the general case, we  apply the previous considerations to the functions $\vp\wedge (mu_0)$, $m\in\nat$,
and   let~$m\to\infty$.
\end{proof}

\begin{corollary}\label{muAKV-equiv}
Let  $u $ be a positive numerical function on $X$ and $x\in X$.
Then the following properties are equivalent:
\begin{itemize}
\item[\rm(i)] 
The function $u$ is nearly hyperharmonic.
\item[\rm(ii)] 
For every subset $A$ of $X\setminus\{x\}$, $\int^\ast u\,d\vx^A\le u(x)$.
\item[\rm(iii)]
For every compact $K$ in $X\setminus \{x\}$, $\int^\ast u\,d\vx^K\le u(x)$.
\end{itemize} 
If $u$ is $\bup$-measurable, then these properties hold if and only if $\int u\,d\mu\le u(x)$
for every $\mu\in\M_x(\W)$.
\end{corollary}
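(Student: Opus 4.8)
The whole statement is pointwise in $x$, and the strategy is to read it off from the quantities $M_u',M_u'',M_u'''$ and $M_u$ of Proposition \ref{M-equiv}. First I would reformulate (i): since $\vx^\vc=\vx$ whenever $x\notin V$ (so that the defining inequality (\ref{def-nearly}) is then trivial), near-hyperharmonicity of $u$ is exactly the assertion that $M_u'(y)\le u(y)$ for every $y\in X$. In the same way, condition (iii) at a point $y$ is precisely $M_u''(y)\le u(y)$, and the special case of (ii) in which $A$ ranges only over Borel finely closed sets is $M_u'''(y)\le u(y)$. Proposition \ref{M-equiv} gives $M_u'=M_u''=M_u'''$, so these three reformulations coincide at every point; together with the trivial implication (ii)$\Rightarrow$(iii) (a compact $K\subset X\setminus\{x\}$ is in particular a subset), this already yields the equivalence of (i), (iii), and the Borel-finely-closed instance of (ii).

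The remaining and main point is to upgrade (ii) from Borel finely closed sets to \emph{arbitrary} subsets $A$ of $X\setminus\{x\}$, i.e.\ to deduce $\int^\ast u\,d\vx^A\le u(x)$ for every such $A$ once $M_u''(x)\le u(x)$ is known. Here I would use that $x\notin A$, so that $\vx^A=\hat\ve_x^A$ is a swept measure and hence depends on $A$ only through its fine behaviour. If $A$ is non-thin at $x$ (equivalently $x$ lies in the base $b(A)$), then $\vx^A=\vx$ and $\int^\ast u\,d\vx^A=u(x)$, so there is nothing to prove. If $A$ is thin at $x$, then $\vx^A$ does not charge $\{x\}$ and is carried by the fine closure of $A$; using inner regularity of the Radon measure $\vx^A$ I would pass to $\int^\ast 1_K u\,d\vx^A$ for compact $K$, and then invoke a domination $1_K\vx^A\le\vx^K$ as in \cite[VI.9.4]{BH} to bound each such integral by $\int^\ast u\,d\vx^K\le M_u''(x)\le u(x)$. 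Equivalently, one replaces $A$ by a Borel finely closed set carrying the same swept measure and appeals directly to $M_u'''$, the invariance of $\hat R_v^A$ under passage to the fine closure being the key fact.

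I expect this reduction to be the real obstacle. For an arbitrary (possibly non-measurable) $A$ the fine closure and the base need not be Borel, so the comparison $1_K\vx^A\le\vx^K$ must be arranged with $K$ chosen inside $A$ — or inside a Borel finely closed hull of $A$ carrying $\vx^A$ — and one must check that the compacts produced by inner regularity of $\vx^A$ are compatible with that choice. This is precisely where the sweeping identities \cite[VI.4.6,\,VI.9.4]{BH} already exploited in the proof of Proposition \ref{M-equiv} have to be used, and it is the only step where the passage from $M_u'''$ (Borel finely closed sets) to genuinely arbitrary subsets has content.

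Finally, for $\bup$-measurable $u$ the extra characterization is immediate from the second assertion of Proposition \ref{M-equiv}: there the upper integrals become integrals and $u\vee M_u'=M_u$. Since $\vx\in\M_x(\W)$ forces $M_u(x)\ge u(x)$, we get $M_u(x)=u(x)\vee M_u'(x)$, so that ``$\int u\,d\mu\le u(x)$ for every $\mu\in\M_x(\W)$'' (that is, $M_u(x)\le u(x)$) holds if and only if $M_u'(x)\le u(x)$, i.e.\ if and only if (i)--(iii) hold.
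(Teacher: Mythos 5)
Your proposal is correct and follows the route the paper intends: the corollary is meant to be read off from Proposition \ref{M-equiv} exactly as you do, with (i), (iii) and the Borel--finely-closed instance of (ii) being the pointwise statements $M_u'\le u$, $M_u''\le u$, $M_u'''\le u$, and the measurable case following from $u\vee M_u'=M_u$. The one step you flag as ``the real obstacle'' --- passing from Borel finely closed sets to arbitrary $A\subset X\setminus\{x\}$ --- is in fact already settled by display (\ref{ext-M}) in Section~1: for \emph{every} subset $A$ of $X$ the measure $\vx^A$ is either $\vx$ or equal to $\vx^B$ for some finely closed $B\in\B$ with $x\notin B$, so $\int^\ast u\,d\vx^A$ is either $u(x)$ or bounded by $M_u'''(x)$, and no inner-regularity or $1_K\vx^A\le\vx^K$ argument for non-measurable $A$ is needed. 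Your second suggested route (replace $A$ by a Borel finely closed set carrying the same swept measure) is precisely this fact, so the proof is complete once you invoke (\ref{ext-M}) rather than treating it as an open difficulty.
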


So our (positive) 
nearly hyperharmonic functions are functions which in \cite{beznea-boboc-feyel, beznea-boboc-book,
feyel-rep,feyel-fine, moko-bourbaki, moko-ens-compacts} (mostly assuming additional measurability properties)
are called strongly supermedian.

\section{Identity of $M_\vp$ and $N_\vp$, Mertens' formula}

In this section, we shall give a fairly straightforward proof for the following result.  

\begin{theorem}\label{main-result}
For every $\bup$-measurable numerical function $\vp\ge 0$ on $X$,
\begin{equation*} 
                    M_\vp=N_\vp=\vp\vee \hat N_\vp.  
\end{equation*} 
In particular, $M_\vp$ is the smallest nearly hyperharmonic majorant of $\vp$,
and $M_\vp$, $N_\vp$  are {\rm(}at least{\rm)} ``as measurable as $\vp$'', that is, if $\A$
 is any $\sigma$-algebra
on $X$ such that $\B\subset \A\subset \bup$ and $\vp$ is $\A$-measurable,
then $M_\vp$, $N_\vp$ are $\A$-measurable. 
\end{theorem}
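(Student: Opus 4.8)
The plan is to establish the three-way identity $M_\vp = N_\vp = \vp\vee\hat N_\vp$ by proving a chain of inequalities and then invoking the structural results already assembled in Sections 1 and 2. First I would show that $M_\vp$ is itself nearly hyperharmonic. By Corollary~\ref{muAKV-equiv}, for a $\bup$-measurable function it suffices to verify $\int M_\vp\,d\vx^K\le M_\vp(x)$ for every compact $K$ in $X\setminus\{x\}$; but since $M_\vp\ge\vp$ pointwise (taking $\mu=\vx^A$ with $A=\emptyset$, i.e.\ $\mu=\vx$, gives $M_\vp(x)\ge\vp(x)$) and $M_\vp$ dominates $\vp$, the key is a transitivity/tower property of sweeping: composing the sweep onto $K$ with the representing measures for the points in the support of $\vx^K$ should yield again a representing measure for $x$, so that $\int M_\vp\,d\vx^K$ is bounded by the supremum defining $M_\vp(x)$. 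This gives $M_\vp\in\N^+$ and hence $N_\vp\le M_\vp$, since $N_\vp$ is by Proposition~\ref{N-proper}(i) the smallest majorant of $\vp$ in $\N^+$.

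For the reverse direction, I would show $M_\vp\le N_\vp$. Here the idea is that any $u\in\N^+$ with $u\ge\vp$ satisfies, for every $\mu\in\M_x(\W)$, the inequality $\int u\,d\mu\le u(x)$ — this is exactly the last clause of Corollary~\ref{muAKV-equiv} applied to $u$ (using that $u$ is $\bup$-measurable, which follows from $\hat u\in\W$ via Proposition~\ref{nearly-proper}(ii) together with the fact that $u$ and $\hat u$ differ only on a polar set). Consequently $\int\vp\,d\mu\le\int u\,d\mu\le u(x)$ for all such $u$ and all $\mu$, and taking the supremum over $\mu$ and then the infimum over $u$ yields $M_\vp(x)\le N_\vp(x)$. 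Combined with the previous paragraph this gives $M_\vp=N_\vp$.

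It remains to identify this common value with $\vp\vee\hat N_\vp$. Since $N_\vp\in\N^+$, Proposition~\ref{nearly-proper}(ii) gives $\widehat{N_\vp}=\hat N_\vp\in\W$, and trivially $N_\vp\ge\vp\vee\hat N_\vp$. For the opposite inequality I would use the decomposition $M_\vp=\vp\vee M_\vp'$ from Proposition~\ref{M-equiv} (valid because $\vp$ is $\bup$-measurable) together with $M_\vp=N_\vp$, reducing matters to showing $M_\vp'\le\hat N_\vp$. The function $M_\vp'=M_\vp'''$ is a sweeping supremum that is finely continuous in an appropriate sense, and because $\hat N_\vp$ is the largest lower semicontinuous minorant of $N_\vp$, the point is to verify that $M_\vp'$ is already lower semicontinuous (equivalently, that it coincides with its own regularization off the "jump set" where $\vp$ exceeds it); this uses that sweeping out of a fixed point $x$ does not see the value $\vp(x)$, so $M_\vp'(x)$ depends only on the fine behaviour near $x$.

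The main obstacle I anticipate is the first step, namely the tower property showing $M_\vp\in\N^+$. One must argue that the "supremum over representing measures" is preserved under an additional sweeping onto a compact set, which requires either a measurable selection of near-optimal representing measures at each point of $\operatorname{supp}\vx^K$ or an integral-representation argument in the spirit of \eqref{int-rep}; controlling upper integrals of the merely $\bup$-measurable function $\vp$ under this composition, while staying within the extreme set $E$ of \eqref{ext-M}, is the delicate measurability point. The truncation device $\vp\wedge(mu_0)$ used in the proof of Proposition~\ref{M-equiv}, letting $m\to\infty$, will likely be needed to handle the general unbounded case.
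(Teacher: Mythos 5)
Your outline has two genuine gaps, one of which is an actual error.

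The central step --- showing $M_\vp\in\N^+$ --- is exactly the point you defer as ``the main obstacle I anticipate'', and the tower/measurable-selection route you sketch for it is not carried out and is genuinely delicate: for a merely $\bup$-measurable $\vp$ you would need a measurable selection $y\mapsto\mu_y$ of near-optimal representing measures from the weak$^\ast$-compact convex sets $\M_y(\W)$, plus control of upper integrals of $\vp$ under the composed kernel. The paper avoids this entirely. It first proves, via Hahn--Banach and weak$^\ast$-compactness (Proposition \ref{R-psi-max}), that for upper semicontinuous $\mathcal P$-bounded $\psi$ there is $\mu\in\M_x(\W)$ with $\int\psi\,d\mu=R_\psi(x)$, whence $M_\psi=N_\psi=R_\psi=\psi\vee\hat R_\psi$ (Corollary \ref{main-psi}); it then writes $M_\vp=\sup\{M_\psi\colon\psi\in\Psi_\vp\}=\vp\vee\sup_m\hat R_{\psi_m}$ by inner approximation with $\psi_m\in\Psi_\vp$, and verifies $\int M_\vp\,d\vx^\vc\le M_\vp(x)$ directly using the elementary inequality $\vp\vee\hat R_\psi-\psi\vee\hat R_\psi\le\vp-\psi$. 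That representation also yields $M_\vp=\vp\vee\hat N_\vp$ and the measurability claim at once (since $\sup_m\hat R_{\psi_m}$ is lower semicontinuous and $\le N_\vp$, hence $\le\hat N_\vp$), whereas your third paragraph rests on an unsubstantiated assertion that $M_\vp'$ is lower semicontinuous.

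Second, in your argument for $M_\vp\le N_\vp$ you claim that every $u\in\N^+$ is $\bup$-measurable because ``$u$ and $\hat u$ differ only on a polar set''. This is false: $\{\hat u<u\}$ need not be polar, nor need $u$ be measurable at all --- see the heat-equation example in Section \ref{fine-imp}, where $u=1_S+1_{\reald\times(0,\infty)}$ is nearly hyperharmonic for an \emph{arbitrary} $S\subset\real^d\times\{0\}$. Dispensing with exactly this hypothesis (the axiom of polarity) is the stated point of the paper. The inequality $M_\vp\le N_\vp$ does hold, but by a different route: since $\vp$ is $\bup$-measurable, Proposition \ref{M-equiv} gives $M_\vp=\vp\vee M_\vp'$, and $M_\vp'(x)=\sup_V\int^\ast\vp\,d\vx^\vc\le\sup_V\int^\ast u\,d\vx^\vc\le u(x)$ for every majorant $u\in\N^+$ of $\vp$, directly from the definition (\ref{def-nearly}), with no measurability of $u$ required.
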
 

\begin{remark}
{\rm  
In a more general setting, this has been shown by different methods 
 for the smaller class of functions   $\vp\ge 0$ which are
nearly Borel measurable or, slightly more general, nearly analytic  (see \cite{mertens, feyel-fine,
beznea-boboc-feyel, beznea-boboc-book}).
} 
\end{remark}

The following simple possibility of replacing $\vp$ by a smaller function when dealing with envelopes such as $R_\vp$ and $N_\vp$
will be useful.

\begin{proposition}\label{replace-vp}
 Let $\F$ be a convex cone of numerical functions on a set $Y$ and $f_0\in \F$, $0<f_0<\infty$. For every 
numerical function $\vp\ge 0$ on $Y$, let 
\begin{equation*}
                                  F_\vp:=\inf \{ f\in \F\colon f\ge \vp\}.
\end{equation*} 
 Then $F_{\vp1_{X\setminus A}}=F_\vp$ for every numerical function   $\vp\ge 0$ on $Y$ and every $A\subset X$ such that   
$\a\vp\le F_\vp\wedge (Mf_0)$ on~$A$  for some $\a,M\in (1,\infty) $. 
\end{proposition}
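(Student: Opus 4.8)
The plan is to work throughout with the three elementary properties of the envelope operator $\vp\mapsto F_\vp$ that are immediate from $\F$ being a convex cone: monotonicity, positive homogeneity, and the fact that every $f\in\F$ with $f\ge\vp$ automatically satisfies $f\ge F_\vp$ (so that $F_{F_\vp}=F_\vp$ and, more generally, every $\F$-majorant of $\vp$ is an $\F$-majorant of any function lying below $F_\vp$). Writing $\psi:=\vp\,1_{X\setminus A}$, one has $\psi\le\vp$ and hence $F_\psi\le F_\vp$ trivially, so everything reduces to proving the reverse inequality $F_\vp\le F_\psi$ pointwise. The crucial preliminary observation is that the auxiliary function $\Psi:=\a\vp\,1_A+\vp\,1_{X\setminus A}$ has the same envelope as $\vp$: indeed $\vp\le\Psi$ gives $F_\vp\le F_\Psi$, while the hypothesis $\a\vp\le F_\vp$ on $A$ (together with $\vp\le F_\vp$ off $A$) gives $\Psi\le F_\vp$, so every $\F$-majorant of $\vp$ is also one of $\Psi$, whence $F_\Psi=F_\vp$. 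This is exactly the place where the assumption $\a\vp\le F_\vp$ on $A$ is consumed.

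The heart of the argument is then a single convex-combination construction, carried out at a fixed point $x$ where the values are finite. Given $\ve>0$, I would choose $g\in\F$ with $g\ge\psi$ and $g(x)\le F_\psi(x)+\ve$, and $f^\ast\in\F$ with $f^\ast\ge\Psi$ and $f^\ast(x)\le F_\Psi(x)+\ve=F_\vp(x)+\ve$, using that the infima defining $F_\psi$ and $F_\Psi$ are pointwise. Now set
\begin{equation*}
h:=(1-\a^{-1})\,g+\a^{-1}f^\ast\in\F .
\end{equation*}
On $X\setminus A$ both $g$ and $f^\ast$ dominate $\vp$, so $h\ge\vp$ there; on $A$ one has $\a^{-1}f^\ast\ge\a^{-1}\Psi=\vp$ and $(1-\a^{-1})g\ge0$, so again $h\ge\vp$. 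Thus $h$ is an $\F$-majorant of $\vp$, giving $F_\vp(x)\le h(x)\le(1-\a^{-1})F_\psi(x)+\a^{-1}F_\vp(x)+\ve$. Letting $\ve\to0$ and cancelling the (finite) term $\a^{-1}F_\vp(x)$ leaves $(1-\a^{-1})F_\vp(x)\le(1-\a^{-1})F_\psi(x)$, and since $\a>1$ this yields $F_\vp(x)\le F_\psi(x)$. I would stress that the gain over the naive additive estimate $F_\vp\le F_\psi+\a^{-1}F_\vp$ comes precisely from the weight $\a^{-1}$ appearing on $f^\ast$ in a genuine convex combination, so that the same coefficient $1-\a^{-1}$ multiplies both sides and cancels; this is why the conclusion is an equality for every $\a>1$ rather than only a bound with constant $\a/(\a-1)$.

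The remaining, and in my view the only delicate, point is the finiteness needed to justify the cancellation, and this is where the majorant $Mf_0$ enters. If $F_\psi(x)=\infty$ then $F_\vp(x)=\infty=F_\psi(x)$ and there is nothing to prove; if $F_\psi(x)<\infty$, pick $g\in\F$ with $g\ge\psi$ and $g(x)<\infty$ and observe that $g+\tfrac{M}{\a}f_0\in\F$ dominates $\vp$ (on $X\setminus A$ it dominates $g\ge\vp$, and on $A$ it dominates $\tfrac{M}{\a}f_0\ge\vp$ by the bound $\a\vp\le Mf_0$), so that $F_\vp(x)\le g(x)+\tfrac{M}{\a}f_0(x)<\infty$ because $0<f_0<\infty$. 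Hence $F_\vp(x)$ is finite exactly when $F_\psi(x)$ is, and the convex-combination step of the previous paragraph applies at every such $x$. The main obstacle to anticipate is therefore not the algebra but getting these two ingredients right simultaneously: recognising that $\Psi=\a\vp$ on $A$ is the correct function to push into $\F$ (so that weight $\a^{-1}$ suffices on $A$), and using the $Mf_0$-bound solely to rule out the degenerate situation $F_\psi(x)<\infty=F_\vp(x)$ at infinity.
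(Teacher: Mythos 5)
Your proof is correct, and it takes a genuinely different route from the paper's. You first show that $\Psi:=\a\vp 1_A+\vp 1_{X\setminus A}$ has the same upper envelope as $\vp$ (the only place where $\a\vp\le F_\vp$ on $A$ is consumed), and then run a pointwise self-improvement argument: the convex combination $(1-\a^{-1})g+\a^{-1}f^\ast$ of near-minimizers for $F_\psi$ and for $F_\Psi=F_\vp$ majorizes $\vp$, giving $F_\vp(x)\le(1-\a^{-1})F_\psi(x)+\a^{-1}F_\vp(x)$, after which the finiteness of $F_\vp(x)$ --- secured by the majorant $g+(M/\a)f_0$, the only place where $Mf_0$ enters --- justifies the cancellation. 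The paper instead argues globally with a single perturbed function $v:=F_{\vp 1_{X\setminus A}}+\ve f_0$ and the scaling constant $\b:=\inf\{b>0\colon\vp\le bv\text{ on }A\}$: the bound $\a\vp\le Mf_0$ gives $\b\le M/\ve$, the cone structure gives $F_\vp\le(1\vee\b)v$, and the hypothesis $\a\vp\le F_\vp$ on $A$ produces a contradiction at a near-extremal point of $A$ unless $\b<1$, whence $F_\vp\le v$ and one lets $\ve\to0$. Both arguments use the two hypotheses at analogous places; yours is longer but arguably more transparent, since every inequality is checked pointwise and no extremal-point contradiction is needed, whereas the paper's version builds the $\ve f_0$ cushion into $v$ from the outset and thereby avoids your explicit case distinction on finiteness.
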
 

\begin{proof} 
Clearly, it suffices to consider the case, where $\vp$ is not identically zero on~$A$. 
Trivially, $u:=F_{\vp 1_{X\setminus A}}\le F_\vp$. For the reverse inequality let 
$\ve>0$, $ v:=u+\ve f_0$, 
\begin{equation*} 
\b:=\inf\{b\in (0,\infty)\colon  \vp\le b v\mbox{ on }A\} \und \g:=1\vee \b.
\end{equation*} 
Then $0<\b\le M/\ve$ and $\vp \le \g v  $ on $X$. Hence $(\b/\a) v(x)<\vp (x)$
for some $x\in A$ and $F_\vp\le \g v$, which leads to $\b v(x)<\a \vp(x)\le F_\vp(x) \le \g v(x)$.
Thus $\g=1$, $v\ge F_\vp$, and  the proof is completed letting $\ve\to 0$.
\end{proof}

Let  $\vp\colon X\to [0,\infty]$ be $\bup$-measurable. Since  $\vp\le N_\vp$ and $N_\vp\in \N^+$,  we 
obtain, by Corollary \ref{muAKV-equiv}, that 
\begin{equation}\label{trivial}
                                                  M_\vp\le M_{N_\vp}\le N_\vp.
\end{equation} 
To prove the reverse inequality we start considering the case, where $\vp$ is  upper semicontinuous  
and $\mathcal P$-bounded. We first recall the following (\cite[Corollary 1.2.2]{H-course}).

\begin{proposition}\label{R-psi}
For all upper semicontinuous $\mathcal P$-bounded positive 
functions $\psi, \psi_1,\psi_2,\dots$ on $X$ the following holds:
\begin{itemize}
\item
The function $R_\psi$ is upper semicontinuous. It is harmonic on $X\setminus \supp(\psi)$. 
\item
If $\psi$ is continuous, then $R_\psi$ is continuous.
\item
If $\psi_m\downarrow \psi$, then $R_{\psi_m}\downarrow R_\psi$.
 \end{itemize}
\end{proposition}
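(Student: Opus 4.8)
The plan is to prove the continuous case first and then bootstrap to the general upper semicontinuous case via the monotone-convergence statement, the harmonicity off the support coming along from the standard theory of réduites. The decisive point everywhere is \emph{upper} semicontinuity, because \emph{lower} semicontinuity of $R_\psi$ is essentially automatic here. Indeed, if $\psi$ is continuous it is finely continuous, hence finely lower semicontinuous, and the fine-limit description of the regularization recalled in Section~1 (applied to $\V=\{u\in\W\colon u\ge\psi\}$, whose infimum is $R_\psi$) gives $\hat R_\psi(x)=\fliminf_{y\to x}R_\psi(y)\ge \fliminf_{y\to x}\psi(y)\ge \psi(x)$. Thus $\hat R_\psi\in\W$ is a majorant of $\psi$, so by minimality $R_\psi\le\hat R_\psi$; since always $\hat R_\psi\le R_\psi$, we get $R_\psi=\hat R_\psi\in\W$, which is lower semicontinuous. (This is the $R$-analogue of Proposition~\ref{N-proper}(iii).) So for the first two items it only remains to establish upper semicontinuity.

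For a continuous $\Px$-dominated $\psi$ I would obtain upper semicontinuity by showing that $R_\psi$ coincides with the infimum of the family $\{w\in\W\cap\C(X)\colon w\ge\psi\}$ of \emph{continuous} hyperharmonic majorants of $\psi$. The inequality $\inf\{w\in\W\cap\C(X)\colon w\ge\psi\}\ge R_\psi$ is trivial, and an infimum of an arbitrary family of continuous functions is upper semicontinuous, so combined with the previous paragraph this yields continuity of $R_\psi$. The real content is the reverse estimate, namely that each value $R_\psi(x)$ is approximable from above by continuous hyperharmonic majorants of $\psi$; this is exactly where the structure of a balayage space enters, through the regularizing resolvent $\vvl$, which manufactures continuous hyperharmonic functions. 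I expect this approximation step to be the main technical obstacle of the continuous case.

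The monotone statement (third item) I would handle next. The inequality $\lim_m R_{\psi_m}\ge R_\psi$ is immediate from $\psi_m\ge\psi$ and monotonicity of $\psi\mapsto R_\psi$, and the limit $g:=\inf_m R_{\psi_m}$ is upper semicontinuous as a decreasing limit of the continuous functions $R_{\psi_m}$. For the reverse inequality $g\le R_\psi$, fix $u\in\W$ with $u\ge\psi$ and $\varepsilon>0$, and consider the decreasing closed sets $K_m:=\{\psi_m\ge u+\varepsilon u_0\}$ (closed since $\psi_m-(u+\varepsilon u_0)$ is upper semicontinuous). On $K_m^c$ one has $\psi_m< u+\varepsilon u_0$, whence by subadditivity of the reduction $R_{\psi_m}\le R_{u+\varepsilon u_0}+R_p^{K_m}=(u+\varepsilon u_0)+R_p^{K_m}$, where $p\in\Px$ dominates $\psi$. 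Since $\bigcap_m K_m\subseteq\{\psi\ge u+\varepsilon u_0\}=\emptyset$ (as $\psi\le u$), a Dini/fundamental-convergence argument — using that $p$ is a potential and that $\Px$-domination controls the behaviour near infinity — gives $R_p^{K_m}\downarrow 0$, hence $g\le u+\varepsilon u_0$. Letting $\varepsilon\to0$ and then passing to the infimum over such $u$ yields $g\le R_\psi$. I regard this reverse inequality, together with the approximation step above, as the heart of the matter.

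Finally, the general upper semicontinuous case. Since $X$ is locally compact with a countable base, hence metrizable, and $\psi$ is dominated by a continuous potential, I can write $\psi=\inf_m\psi_m$ with $\psi_m\in\W$-irrelevant continuous $\Px$-dominated functions decreasing to $\psi$. The third item then gives $R_\psi=\inf_m R_{\psi_m}$, an infimum of the continuous functions $R_{\psi_m}$, so $R_\psi$ is upper semicontinuous. Harmonicity of $R_\psi$ on $X\setminus\supp\psi$ follows in all cases from the standard fact that $\hat R_\psi$ is harmonic on the fine interior of $\{\hat R_\psi>\psi\}$: on the open set $X\setminus\supp\psi$ we have $\psi=0$, so $\hat R_\psi$ is harmonic there, and $R_\psi=\hat R_\psi$ there because the two differ only on a semipolar subset of the contact set $\{R_\psi=\psi\}$, which meets $X\setminus\supp\psi$ only where $R_\psi=0$.
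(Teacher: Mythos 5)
The paper itself gives no proof of this proposition: it is quoted from \cite[Corollary 1.2.2]{H-course}, so there is no internal argument to compare yours against. Your architecture (continuous case first, then the decreasing-limit statement, then bootstrap to upper semicontinuous $\psi$) is the standard one, and the pieces you actually carry out are essentially sound: the identity $R_\psi=\hat R_\psi$ for continuous $\psi$ via fine lower semicontinuity is correct, and your proof of the third item goes through once you (i) take a single $p\in\Px$ dominating $\psi_1$ (hence all $\psi_m$), and (ii) make the final limit precise: the closed sets $K_m$ need not be compact, but for every compact $L$ the sets $K_m\cap L$ are compact, decreasing, with empty intersection, so $K_m\subset X\setminus L$ for large $m$, whence $R_p^{K_m}\le R_p^{X\setminus L}\le \sup_{X\setminus L}(p/v)\cdot v$, which decreases to $0$ because $p/v$ vanishes at infinity.

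The genuine gap is the step you yourself flag as ``the main technical obstacle'': you never prove that for continuous $\mathcal P$-bounded $\psi\ge 0$ the reduced function $R_\psi$ is the infimum of its \emph{continuous} hyperharmonic majorants (equivalently, that $R_\psi$ is upper semicontinuous). This is the heart of the proposition --- without it neither the second item nor, via your bootstrap, the first item is established --- and it is precisely where the balayage-space axioms (approximation by $\W\cap\C(X)$, the regularizing resolvent) must actually be used, so it cannot be deferred with ``I expect''. A second, smaller defect is the harmonicity argument: you justify $R_\psi=\hat R_\psi$ on $X\setminus\supp(\psi)$ by asserting that $\{\hat R_\psi<R_\psi\}$ is contained in $\{R_\psi=\psi\}$, which is essentially the identity $R_\psi=\psi\vee\hat R_\psi$ of Corollary~\ref{main-psi}; in this paper that corollary is \emph{deduced from} the present proposition, so as written your argument is circular, and the containment is not otherwise justified for general $\vp\ge 0$. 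It is easily repaired: writing $\psi=\inf_m\psi_m$ with $\psi_m$ continuous and $\supp(\psi_m)$ shrinking to $\supp(\psi)$, each $R_{\psi_m}=\hat R_{\psi_m}$ is harmonic on a fixed neighborhood of a given point of $X\setminus\supp(\psi)$ for $m$ large, and the decreasing limit of these finite positive harmonic functions is harmonic by monotone convergence applied to $\int R_{\psi_m}\,d\vx^\vc$.
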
 

The following consequence of the theorem of Hahn-Banach is known
in more general situations (see e.g.\ \cite[p.\ 226]{moko-stresa}).  
For the convenience of the reader we include a~complete proof.

\begin{proposition}\label{R-psi-max}
  Let $\psi\ge 0$ be upper semicontinuous and $\mathcal P$-bounded. 
Then, for every $x\in X$, there exists $\mu\in\M_x(\W)$ such that $ \int \psi \,d\mu=R_\psi(x)$.
\end{proposition}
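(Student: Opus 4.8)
The plan is to reduce to the case of a continuous function by approximation from above, and to treat that case by a Hahn--Banach extension. Throughout fix $x\in X$. Note first that one inequality is free: for every $\mu\in\M_x(\W)$ and every $u\in\W$ with $u\ge\psi$ we have $\int\psi\,d\mu\le\int u\,d\mu\le u(x)$, so taking the infimum over such $u$ gives $\int\psi\,d\mu\le R_\psi(x)$ (finite, since $\psi\le p_0$ for some potential $p_0\in\Px$ forces $R_\psi\le p_0<\infty$). Hence it suffices to produce one $\mu\in\M_x(\W)$ with $\int\psi\,d\mu\ge R_\psi(x)$.

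The core step is the case of a continuous $\mathcal P$-bounded $\chi\ge 0$. On the vector space $E:=\C_{\mathcal P}(X)$ define
\[
 q(f):=\inf\{w(x)\colon w\in\W\cap\C_{\mathcal P}(X),\ w\ge f\},\qquad f\in E.
\]
Since $f\le p$ for some potential $p$, the competing set is nonempty and $f(x)\le q(f)\le p(x)<\infty$; adding majorants shows $q$ is subadditive, and it is positively homogeneous, so $q$ is sublinear. By Proposition \ref{R-psi}, $R_\chi$ is continuous and $\mathcal P$-bounded, hence $R_\chi\in\W\cap\C_{\mathcal P}(X)$ is itself an admissible competitor while every competitor $w$ satisfies $w\ge R_\chi$; therefore $q(\chi)=R_\chi(x)$. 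Set $L_0(t\chi):=t\,R_\chi(x)$ on $\real\chi$; using $0=q(0)\le q(\chi)+q(-\chi)$ one checks $L_0\le q$ there, so Hahn--Banach yields a linear $L\le q$ on $E$. For $f\le 0$ one has $q(f)\le 0$ (take $w=0\in\W$), whence $L$ is monotone; by the Riesz representation theorem applied to the restriction of $L$ to $\C_c(X)$, together with the finiteness $\int p\,d\mu=L(p)\le q(p)\le p(x)<\infty$ for every potential $p$ (a standard argument extends the representation from $\C_c(X)$ to all of $E$), $L$ is given by integration against a Radon measure $\mu$. For $w\in\W\cap\C_{\mathcal P}(X)$ we get $\int w\,d\mu=L(w)\le q(w)\le w(x)$, and since every element of $\W$ is an increasing limit of potentials this extends to all of $\W$, so $\mu\in\M_x(\W)$; finally $\int\chi\,d\mu=L(\chi)=R_\chi(x)$.

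For the general upper semicontinuous $\psi$, choose continuous functions with $0\le\psi\le\psi_n\le p_0$ and $\psi_n\downarrow\psi$ (replacing any continuous approximants by $(\psi_n\wedge p_0)\vee 0$); by Proposition \ref{R-psi} we have $R_{\psi_n}\downarrow R_\psi$, so $R_{\psi_n}(x)\downarrow R_\psi(x)$. The core step provides $\mu_n\in\M_x(\W)$ with $\int\psi_n\,d\mu_n=R_{\psi_n}(x)$. By the weak$^\ast$-compactness of $\M_x(\W)$ recalled in Section~1 there are a subsequence $(\mu_{n_k})$ and $\mu\in\M_x(\W)$ with $\int f\,d\mu_{n_k}\to\int f\,d\mu$ for all $f\in\C_{\mathcal P}(X)$. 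For fixed $m$ and $n_k\ge m$ we have $\psi_{n_k}\le\psi_m$, so $R_{\psi_{n_k}}(x)=\int\psi_{n_k}\,d\mu_{n_k}\le\int\psi_m\,d\mu_{n_k}$; letting $k\to\infty$ gives $R_\psi(x)\le\int\psi_m\,d\mu$, and letting $m\to\infty$ (monotone convergence, since $\psi_m\downarrow\psi$ and $\psi_1\le p_0$ is $\mu$-integrable) gives $R_\psi(x)\le\int\psi\,d\mu$. Combined with the free inequality, $\int\psi\,d\mu=R_\psi(x)$, as desired.

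The main obstacle is the core Hahn--Banach step, and within it the passage from the abstract dominated functional $L$ to an honest Radon measure lying in $\M_x(\W)$: one must verify that $L$ is not merely monotone but is represented, on all of $E=\C_{\mathcal P}(X)$ rather than just on $\C_c(X)$, by integration against a measure, which is exactly where $\mathcal P$-boundedness (the finiteness of $\int p\,d\mu$) is essential. The upper semicontinuous reduction is comparatively routine, relying only on Proposition \ref{R-psi} and the stated weak$^\ast$-sequential compactness of $\M_x(\W)$.
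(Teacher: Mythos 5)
Your proof is correct and takes essentially the same route as the paper: a Hahn--Banach argument against a sublinear functional to produce a maximizing measure for a continuous $\mathcal P$-bounded function (the paper uses $f\mapsto R_{f^+}(x)$ where you use the infimum of continuous hyperharmonic majorants at $x$, which coincides with $R_\chi(x)$ by Proposition \ref{R-psi}), followed by a decreasing continuous approximation of $\psi$ and the weak$^\ast$-compactness of $\M_x(\W)$ to pass to the upper semicontinuous case. The differences are only cosmetic.
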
 

\begin{proof} (a) Let $x\in X$ and $\vp\in\C_{\mathcal P}(X)$, $\vp\ge 0$. Since the mapping $f\mapsto R_{f^+}(x)$
is sublinear on $\C_{\mathcal P}(X)$,  there exists a linear form $\mu$ on $\C_{\mathcal P}(X)$ such that
\begin{equation*}
                                                \mu(\vp)=R_\vp(x) \und \mu(f)\le R_{f^+}(x) \mbox{ for every }f\in \C_{\mathcal P}(X). 
\end{equation*} 
If $f\in \C_{\mathcal P}(X)$ and $f\le 0$, then $\mu(f)\le R_0(x)=0$. Therefore $\mu$ is a measure on $X$. Of course,
$\mu(p)\le p(x)$ for every $p\in \Px$, and hence $\mu\in \M_x(\W)$.

(b) There exist $\vp_m\in \C_{\mathcal P}(X)$ such that $\vp_m\downarrow \psi$. By (a), for every $m\in\nat$, 
there exists a measure $\mu_m\in \M_x(\W)$ such that $\mu_m(\vp_m)=R_{\vp_m}(x)$. 
We may (passing to a subsequence) assume without loss
of generality that the sequence $(\mu_m)$ converges to a measure $\mu\in\M_x(\W)$ (that is,
$\lim_{m\to\infty} \mu_m(f)=\mu(f)$ for every $f\in \C_{\mathcal P}(X)$). Then, for every $k\in\nat$,
\begin{equation*}
   R_\psi(x)=\lim_{m\to\infty} R_{\vp_m}(x)=\lim_{m\to\infty} \mu_m(\vp_m)\le \lim_{m\to\infty} \mu_m(\vp_k)=\mu(\vp_k).
\end{equation*} 
Letting $k\to \infty$, we get $R_\psi(x)\le \int \psi\,d\mu$. Trivially     
$\int \psi\,d\mu\le \int R_\psi \,d\mu \le R_\psi(x)$.
\end{proof} 

\begin{corollary}\label{main-psi}
  Let $\psi\colon X\to [0,\infty]$ be upper semicontinuous and  $\mathcal P$-bounded.  Then 
\begin{equation*} 
  M_\psi=N_\psi= R_\psi=\psi\vee \hat R_\psi . 
\end{equation*} 
\end{corollary}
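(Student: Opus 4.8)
The goal is to prove Corollary~\ref{main-psi}: for upper semicontinuous $\mathcal P$-bounded $\psi\ge 0$, we have $M_\psi=N_\psi=R_\psi=\psi\vee\hat R_\psi$. The plan is to assemble these four quantities into a chain of inequalities that closes up, using the results already established. First I would observe that the inequalities $M_\psi\le N_\psi\le R_\psi$ are essentially available: the estimate $M_\psi\le N_\psi$ follows from (\ref{trivial}) once we know $\psi$ is $\bup$-measurable (an upper semicontinuous function is Borel, hence $\bup$-measurable), and $N_\psi\le R_\psi$ is part of Proposition~\ref{N-proper}(i), since $\W\subset\N^+$ means the infimum defining $N_\psi$ is taken over a larger cone than the one defining $R_\psi$.

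The crucial reverse step is $R_\psi\le M_\psi$, and here Proposition~\ref{R-psi-max} does the work directly. The plan is: fix $x\in X$; by Proposition~\ref{R-psi-max} there exists $\mu\in\M_x(\W)$ with $\int\psi\,d\mu=R_\psi(x)$. Since $\psi$ is $\bup$-measurable, the upper integral in the definition of $M_\psi$ is an honest integral, so
\begin{equation*}
R_\psi(x)=\int\psi\,d\mu\le\sup\Bigl\{\int\psi\,d\mu'\colon \mu'\in\M_x(\W)\Bigr\}=M_\psi(x).
\end{equation*}
Combining with the previous paragraph yields $M_\psi=N_\psi=R_\psi$. This is the heart of the matter, and I expect no real obstacle here since Proposition~\ref{R-psi-max} was proved precisely to furnish the maximizing representing measure; the only point to check is that the $\mathcal P$-boundedness and upper semicontinuity hypotheses of that proposition are inherited by $\psi$, which they are by assumption.

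It remains to identify $R_\psi$ with $\psi\vee\hat R_\psi$. Since $R_\psi=N_\psi$ and $N_\psi\in\N^+$, Proposition~\ref{nearly-proper}(ii) gives $\widehat{R_\psi}=\widehat{N_\psi}=\hat N_\psi\in\W$, so $\hat R_\psi$ is hyperharmonic. The inequality $\psi\vee\hat R_\psi\le R_\psi$ is clear because $\psi\le R_\psi$ by definition and $\hat R_\psi=\widehat{R_\psi}\le R_\psi$. For the reverse inequality I would invoke Proposition~\ref{M-equiv}: since $\psi$ is $\bup$-measurable, that proposition yields $\psi\vee M_\psi'=M_\psi$, and the chain $M_\psi'\le M_\psi=R_\psi$ combined with the fact that $M_\psi'$ (being, via $M_\psi'=M_\psi'''$, a supremum over reduced measures on finely closed sets avoiding $x$) is dominated by $\hat R_\psi$ at each point would close the gap. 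The step requiring the most care is verifying that $M_\psi'\le\hat R_\psi$ pointwise; concretely one uses that $\vx^A=\hat\ve_x^A$ for $x\notin A$ together with the fact that the swept-measure integrals $\int\psi\,d\hat\ve_x^A$ are controlled by $\hat R_\psi(x)=\inf_{x\in V\in\U_c}\int\psi\,d\vx^{V^c}$ type estimates coming from the fine lower regularization. Once $M_\psi=\psi\vee M_\psi'\le\psi\vee\hat R_\psi$ is in hand, equality follows, completing the proof.
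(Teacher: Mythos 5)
The first half of your argument --- the chain $M_\psi\le N_\psi\le R_\psi\le M_\psi$ obtained from (\ref{trivial}), Proposition \ref{N-proper}(i) and Proposition \ref{R-psi-max} --- is correct and is exactly the paper's proof of $M_\psi=N_\psi=R_\psi$. The problem lies in the second half. The inequality you yourself single out as ``the step requiring the most care'', namely $M_\psi'\le\hat R_\psi$, is where all the remaining work sits, and your justification for it does not stand up. The identity you appeal to, $\hat R_\psi(x)=\inf\{\int\psi\,d\vx^{V^c}\colon x\in V\in\U_c\}$, is false: since $\vx^{V^c}$ is supported by $V^c$, the right-hand side is $0$ as soon as $\psi$ has compact support (take $V\supset\supp(\psi)$), whereas $\hat R_\psi(x)$ is in general strictly positive. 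More fundamentally, the natural estimate $\int^\ast\psi\,d\vx^{V^c}\le\int^\ast R_\psi\,d\vx^{V^c}$ cannot simply be replaced by $\int\hat R_\psi\,d\vx^{V^c}\le\hat R_\psi(x)$, because the set $\{\hat R_\psi<R_\psi\}$ is only semipolar, and in a general balayage space harmonic measures may charge semipolar sets (for the heat equation, $\vx^{V^c}$ can give positive mass to a horizontal hyperplane). So the gap is genuine, not a matter of routine verification.

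What is needed --- and what the paper supplies --- is a localization. At a point $x$ with $N_\psi(x)>\psi(x)$ one replaces $\psi$ by $\psi_m:=\psi 1_{X\setminus V_m}$ with $V_m\downarrow\{x\}$; Propositions \ref{nearly-proper}, \ref{N-proper} and \ref{replace-vp} give $N_{\psi_m}\uparrow N_\psi$ and $\hat N_{\psi_m}\uparrow\hat N_\psi$, while Proposition \ref{R-psi} makes $R_{\psi_m}$ harmonic, hence continuous, on $V_m$, so that $\hat R_{\psi_m}(x)=R_{\psi_m}(x)$; applying the already proved identity $N_{\psi_m}=R_{\psi_m}$ then yields $\hat N_\psi(x)=N_\psi(x)$. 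Your claim $M_\psi'\le\hat R_\psi$ is in fact true, but its proof requires the same device: once $V_m\subset V$ one has $\int^\ast\psi\,d\vx^{V^c}=\int^\ast\psi_m\,d\vx^{V^c}\le R_{\psi_m}(x)=\hat R_{\psi_m}(x)\le\hat R_\psi(x)$, using again that $\vx^{V^c}$ is carried by $V^c$ and that $R_{\psi_m}$ is harmonic near $x$. So your route through Proposition \ref{M-equiv} does not avoid the harmonicity argument; it only postpones it, and as written the proof is incomplete at exactly that point.
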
 

\begin{proof} By Proposition  \ref{R-psi-max}, $R_\psi\le M_\psi$. 
By (\ref{trivial}) and Proposition \ref{N-proper},  $M_\psi\le N_\psi \le R_\psi$. Therefore
\begin{equation}\label{essential}
M_\psi=N_\psi= R_\psi.
\end{equation} 
To complete the proof it suffices to show that $N_\psi=\psi\vee {\hat N_\psi}$.

To that end let us consider $x\in X$ such that $N_\psi(x)>\psi(x)$. Let $V_m$ be open 
neighborhoods of $x$ such that $V_m\downarrow \{x\}$. Then the functions 
$\psi_m:=\psi 1_{X\setminus V_m}$ are upper semicontinuous and $\psi_m\uparrow \vp:=\psi 1_{X\setminus \{x\}}$. 
Hence $N_{\psi_m}\uparrow N_\vp=N_\psi$  and $\hat N_{\psi_m} \uparrow 
\hat N_\psi$, 
by Propositions~\ref{nearly-proper} and \ref{replace-vp}. 

For every $m\in\nat$, the function  $R_{\psi_m}$ is harmonic on $V_m$, by Proposition \ref{R-psi},
and hence  $\hat R_{\psi_m}(x)=R_{\psi_m}(x)$. So, by (\ref{essential}) (applied to $\psi_m$),     
\begin{equation*} 
\hat N_\psi(x)=\lim\nolimits_{m\to\infty} \hat N_{\psi_m} (x)=\lim\nolimits_{m\to\infty}  N_{\psi_m} (x)=N_\psi(x).
\end{equation*} 
\end{proof}

For every $\vp\colon X\to [0,\infty]$, let $\Psi_\vp$ denote the set of all 
  bounded upper semicontinuous functions $0\le \psi\le \vp$ with compact 
support in $\{\vp>0\}$.
We are now able to prove even  more than announced in Theorem \ref{main-result}.

\begin{theorem}\label{main-theorem} 
Let $\vp\colon X\to [0,\infty] $ be  $\bup$-measurable.  Then    
\begin{equation}\label{main-formula}
                 M_\vp=N_\vp= \vp\vee \hat N_\vp=\sup\{N_\psi\colon \psi\in \Psi_\vp\},
\end{equation} 
  and there is an increasing sequence $(\psi_m)$ in $\Psi_\vp$ such that 
\begin{equation}\label{n-formula}
               N_\vp=\vp\vee \sup\nolimits_{m\in\nat} R_{\psi_m}
                        =\vp\vee \sup\nolimits_{m\in\nat} \hat R_{\psi_m}. 
\end{equation} 
If $\vp\le s$ for some  $s\in\W\cap \C(X)$,  
then $M_\vp$ is harmonic on any open set,  where $M_\vp\ge  \a\vp$ for some $\a>1$. 
\end{theorem}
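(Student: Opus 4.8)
My plan is to prove the three equalities of (\ref{main-formula}) in turn and then to deduce (\ref{n-formula}) and the harmonicity assertion, using Corollary~\ref{main-psi} (which disposes of bounded upper semicontinuous $\mathcal P$-bounded functions) as the only input from ``hard'' potential theory; everything else rests on the stability of $\N^+$ from Propositions~\ref{nearly-proper}--\ref{N-proper} and on the replacement device Proposition~\ref{replace-vp}.

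First I would establish the rightmost identity. For a fixed $\mu\in\M_x(\W)$ the function $\vp$ is $\mu$-measurable, since $\bup$ lies in the $\mu$-completion of $\B$ (Section~1); hence by inner regularity of the Radon measure $\mu$ one has $\int\vp\,d\mu=\sup\{\int\psi\,d\mu:\psi\in\Psi_\vp\}$, the requirement $\supp\psi\subset\{\vp>0\}$ being met by first choosing a compact $K\subset\{\vp\ge 1/k\}$ carrying nearly all the mass. Interchanging the two suprema gives $M_\vp=\sup_{\psi}M_\psi$, and Corollary~\ref{main-psi} replaces each $M_\psi$ by $N_\psi=R_\psi=\psi\vee\hat R_\psi$. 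Since the spiky functions $c\,1_{\{y\}}$ belong to $\Psi_\vp$, we have $\sup_{\psi}\psi=\vp$ pointwise, so I arrive at the working form
\[
   M_\vp=\sup\{N_\psi:\psi\in\Psi_\vp\}=\sup_{\psi}R_\psi=\vp\vee w,\qquad w:=\sup_{\psi\in\Psi_\vp}\hat R_\psi ,
\]
where $\{\hat R_\psi:\psi\in\Psi_\vp\}\subset\W$ is upward directed.

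Next comes the decisive inequality $N_\vp\le M_\vp$. As $M_\vp\ge\vp$, by Proposition~\ref{N-proper}(i) it suffices to show $M_\vp\in\N^+$, that is (Corollary~\ref{muAKV-equiv}(iii)) $\int^\ast M_\vp\,d\vx^K\le M_\vp(x)$ for every compact $K\subset X\setminus\{x\}$. I expect this to be the main obstacle, because $\{R_\psi\}$ is only upper semicontinuous and its supremum need not commute with integration against $\mu:=\vx^K$. I would sidestep this via the splitting $M_\vp=\vp\vee w$: choose one increasing sequence $(\psi_m)$ in $\Psi_\vp$ maximising simultaneously $\int\psi\,d\mu\uparrow\int\vp\,d\mu$ (inner regularity) and $\int\hat R_\psi\,d\mu\uparrow\int w\,d\mu$ (integration \emph{does} commute with suprema of upward directed families of lower semicontinuous functions, and the $\hat R_\psi\in\W$ are such). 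Then $\sup_m R_{\psi_m}=(\sup_m\psi_m)\vee(\sup_m\hat R_{\psi_m})=\vp\vee w=M_\vp$ holds $\mu$-a.e., so
\[
   \int^\ast M_\vp\,d\mu=\lim_m\int R_{\psi_m}\,d\mu\le M_\vp(x),
\]
the last step since $R_{\psi_m}=N_{\psi_m}\in\N^+$ and $R_{\psi_m}\le M_\vp$. With (\ref{trivial}) this yields $M_\vp=N_\vp$.

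For $N_\vp=\vp\vee\hat N_\vp$ I argue at a point $x$ with $N_\vp(x)>\vp(x)$ (which forces $\vp(x)<\infty$). Proposition~\ref{replace-vp} with $\F=\N^+$, $f_0=u_0$, $A=\{x\}$ gives $N_{\vp 1_{X\setminus\{x\}}}=N_\vp$; and every $\psi\in\Psi_{\vp 1_{X\setminus\{x\}}}$ has compact support avoiding $x$, so by Proposition~\ref{R-psi} the reduced function $R_\psi$ is harmonic, hence continuous, near $x$, whence $\hat R_\psi(x)=R_\psi(x)$. Applying the identities already proved to $\vp 1_{X\setminus\{x\}}$ now gives $N_\vp(x)=\sup_\psi R_\psi(x)=\sup_\psi\hat R_\psi(x)\le\hat N_\vp(x)\le N_\vp(x)$, i.e.\ $N_\vp(x)=\hat N_\vp(x)$, completing (\ref{main-formula}). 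For (\ref{n-formula}) the fundamental convergence theorem applied to the upward directed family $\{\hat R_\psi\}\subset\W$ (see \cite{BH}) furnishes an increasing sequence $(\psi_m)$ in $\Psi_\vp$ with $\sup_m\hat R_{\psi_m}=w$ everywhere (both suprema being lower semicontinuous with equal regularisations), and then $\vp\vee\sup_m\hat R_{\psi_m}=\vp\vee w=M_\vp=\vp\vee\sup_m R_{\psi_m}$. Finally, suppose $\vp\le s\in\W\cap\C(X)$ and $M_\vp\ge\a\vp$ on an open set $O$ with $\a>1$. Proposition~\ref{replace-vp} with $f_0=s+u_0$ and $A=O$ gives $N_{\vp 1_{X\setminus O}}=N_\vp$; repeating the extraction for $\Psi_{\vp 1_{X\setminus O}}$ produces harmonic (Proposition~\ref{R-psi}) functions $R_{\psi_m}$ on $O$ with $\sup_m R_{\psi_m}=M_\vp$ there, and since $M_s=s$ forces $M_\vp\le s<\infty$, this increasing limit of locally bounded harmonic functions is harmonic on $O$.
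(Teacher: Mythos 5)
Your proof is correct and follows essentially the same route as the paper's: reduction to the upper semicontinuous $\mathcal P$-bounded functions in $\Psi_\vp$ via Corollary~\ref{main-psi}, the identity $M_\vp=\vp\vee\sup_{\psi\in\Psi_\vp}\hat R_\psi$, and the key verification that $M_\vp\in\N^+$ by approximating $M_\vp$ from below, $\mu$-almost everywhere for each fixed $\mu=\vx^{\vc}$ (equivalently $\vx^K$), by an increasing sequence $R_{\psi_m}=\psi_m\vee\hat R_{\psi_m}$. The only differences are cosmetic: the paper packages that last step as an $\ve$-argument with one universal sequence from \cite[I.1.7]{BH} plus a single auxiliary $\psi$ adapted to $\mu$, rather than your per-measure maximizing sequence with monotone convergence, and it obtains $N_\vp=\vp\vee\hat N_\vp$ directly from the squeeze $M_\vp\le\vp\vee\hat N_\vp\le N_\vp$ instead of your (equally valid) pointwise argument via Proposition~\ref{replace-vp}.
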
 

\begin{proof} Clearly, $M_\vp=\sup\{M_\psi\colon \psi\in \Psi_\vp\}$, where
$M_\psi=N_\psi =R_\psi=\psi\vee \hat R_\psi$ for every $\psi\in \Psi_\vp$,   by Corollary \ref{main-psi}.
 Since $(\vp\wedge n) 1_{\{x\}} \in\Psi_\vp$, $x\in X$, $n\in\nat$,
we obtain that
\begin{equation}\label{vp-first}
                M_\vp=\sup\{N_\psi\colon \psi\in \Psi_\vp\}=\vp\vee \sup \{{\hat N_\psi}\colon \psi\in \Psi_\vp\}
       \le \vp\vee {\hat N_\vp }\le N_\vp.
\end{equation} 
In particular, $M_\vp$ is $\bup$-measurable.    
By  \cite[I.1.7]{BH}, there is an increasing sequence~$(\psi_m)$ in~$\Psi_\vp$ such that   
$\sup\nolimits_{m\in\nat} \hat R_{\psi_m}= \sup \{\hat R_\psi\colon \psi\in \Psi_\vp\}$. 
Then $\vp\vee  \hat R_{\psi_m}\uparrow M_\vp$ as $m\to \infty$.

We now claim that  
\begin{equation}\label{main-work}
M_\vp\in \N^+,
\end{equation} 
and therefore $ N_\vp\le M_\vp$. 
Having (\ref{vp-first}) this implies that  (\ref{main-formula}) and (\ref{n-formula}) hold.

So let $x\in V\in\U_c$, $\mu:=\vx^\vc$. To show that $\int M_\vp\,d\mu\le M_\vp(x)$  
we may assume that $\int \vp\,d\mu<\infty$, since otherwise $M_\vp(x)=\infty$. 
 Let $a<b<\int M_\vp\,d\mu$. Then there exist $m\in\nat$ and $\psi\in \Psi_\vp$ such that 
\begin{equation*} 
b<\int \vp\vee\hat R_{\psi_m}\,d\mu \und \int (\vp-\psi)\,d\mu < b-a.
\end{equation*} 
 Of~course, we may assume that $\psi_m\le \psi$. Since trivially
$\vp\vee \hat R_\psi-\psi\vee \hat R_\psi\le \vp-\psi$, we obtain that
\begin{equation*}
        b<\int \vp \vee \hat R_\psi \,d\mu\le \int \psi\vee \hat R_\psi\,d\mu+(b-a),
\end{equation*} 
and hence
\begin{equation*}
                a<\int \psi\vee \hat R_\psi\,d\mu=\int R_\psi\,d\mu\le R_\psi(x) \le M_\vp(x).
\end{equation*} 
Thus $\int M_\vp\,d\mu\le M_\vp(x)$ proving (\ref{main-work}).

Finally, suppose that $\vp\le w$ for some $w\in \W\cap \C(X)$ and let $U$ be an open set, where $\vp$
vanishes. Then all functions  $g_m:=R_{\psi_m}|_U$  are harmonic on $U$, by Proposition~\ref{R-psi}, and 
hence $M_\vp|_U=\sup g_m$ is harmonic on $U$. An application of Proposition~\ref{replace-vp}
completes the proof.
\end{proof} 

\begin{remark}\label{axiom-polarity}
{\rm
 If  the semipolar sets $\{\widehat{\inf \V}<\inf \V\}$, $\V\subset \W$, are polar
(axiom of polarity, Hunt's hypothesis (H)) and $\vp\colon X\to [0,\infty]$ is $\tilde \B$-measurable, 
then by \cite[Theorem 2.2]{reduced-jensen} and (\ref{n-formula}) there exists an increasing sequence 
 $(\psi_n)$ in $\Psi_\vp$ such that
\begin{equation} \label{R=N}
                       R_\vp=\vp\vee \sup\nolimits_{n\in\nat} R_{\psi_n}=N_\vp.
\end{equation} 
By \cite[Theorem 6.3]{beznea-boboc-feyel}, (\ref{R=N}) holds even without assuming the axiom of polarity.
}
\end{remark}

\section{Application to Jensen measures}\label{jensen-appl}

In this section,  let us suppose that $(X,\W)$ is a harmonic space, that is,
the harmonic measures $\mu_x^V=\vx^\vc$,   $V$ relatively compact 
open in $X$, $x\in X$, are supported by the boundary $\partial V$ of $V$.

Given an open set $U$  in $X$,  let $\h(U)$  denote the set of all 
hyperharmonic functions on $U$, that is, lower semicontinuous numerical functions $w>-\infty$ on $U$
such that $\int w\,d\vx^\vc\le w(x)$  for all open $V$, which are relatively compact in $U$, and  $x\in V$.

Given $x\in U$, 
let $\J_x(U)$ denote the set of all Jensen measures for~$x$ with respect to~$U$,
that is, measures~$\mu$ with compact support in~$U$ satisfying 
\begin{equation}\label{def-jensen}
                                 \int w\,d\mu\le w(x) \qquad \mbox{ for every }w\in \h(U). 
\end{equation} 
In fact, it suffices to know (\ref{def-jensen}) for all $w\in \h(U)\cap \C(U)$, since every $w\in \h(U)$ 
is an  increasing limit of functions in $ \h(U)\cap \C(U)$. 

Since $\W=\{w\in\h(X)\colon w\ge 0\}$ and $\h(X)|_U\subset \h(U)$, we have 
\begin{equation*} 
\vx\in \J_x(U)\subset \J_x(X)\subset \M_x(\W), \qquad x\in U
\end{equation*} 
(where we consider   measures in $\J_x(U)$ as measures on $X$).
It will be convenient to introduce also the union  $\J_x'(X)$ of  all $\J_x(U)$, $U$ open relatively compact in $X$,

$x\in U$ (see~\cite{HN-jensen} for properties implying that $\J_x'(X)=\J_x(X)$).

Finally, for every locally lower bounded function $\vp$ on $X$ which is 
$\bup$-measurable, we define functions $J_\vp$ and $J_\vp'$ on $X$ by 
\begin{equation*}
                             J_\vp(x):=\sup\bigl\{\int \vp\,d\mu\colon \mu\in \J_x(X)\bigr\}
\und                    J_\vp'(x):=\sup\bigl\{\int \vp\,d\mu\colon \mu\in \J_x'(X)\bigr\}.
\end{equation*} 

If $\vp\ge 0$, then obviously $M_\vp'\le J_\vp'\le J_\vp\le M_\vp$. Therefore Proposition \ref{M-equiv} and Theorem \ref{main-theorem}
immediately yield  the following.

\begin{theorem}\label{jensen-theorem}
Let $\vp$ be a positive $\bup$-measurable numerical function on $X$. Then 
\begin{equation*}
                                  J_\vp=J_\vp'=  \vp\vee  M_\vp'=M_\vp=N_\vp=\vp\vee \hat N_\vp.
 \end{equation*} 
  In particular, $J_\vp$ is  Borel measurable if $\vp$ is Borel measurable. 
\end{theorem}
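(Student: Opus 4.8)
The plan is to read everything off from the two-sided estimate $M_\vp'\le J_\vp'\le J_\vp\le M_\vp$ announced in the paragraph preceding the theorem, and then to feed this chain into Proposition \ref{M-equiv} and Theorem \ref{main-theorem}, which already contain all the analytic content. Thus the first task is to justify that chain carefully, the one subtlety being that $M_\vp'$ is defined with an upper integral while $J_\vp$ and $J_\vp'$ are defined with honest integrals. This is harmless here: since $\vp$ is $\bup$-measurable and $\bup$ lies in the completion of $\B$ with respect to every $\mu\in\M_x(\W)$ (established in Section 1), one has $\int^\ast\vp\,d\mu=\int\vp\,d\mu$ for all the measures occurring below, so the distinction disappears throughout.

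For the outer inclusions I would use the nesting $\J_x'(X)\subset\J_x(X)\subset\M_x(\W)$ recorded in Section \ref{jensen-appl}: taking suprema of $\int\vp\,d\mu$ over these three families immediately gives $J_\vp'\le J_\vp\le M_\vp$. The only inequality genuinely requiring the standing harmonic-space hypothesis is $M_\vp'\le J_\vp'$, and for this I would show that each harmonic measure $\vx^\vc$ with $x\in V\in\U_c$ is itself a Jensen measure. Choosing a relatively compact open $U\supset\ov V$, the set $V$ is relatively compact in $U$ and $x\in V$, so every $w\in\h(U)$ satisfies $\int w\,d\vx^\vc\le w(x)$ by the very definition of $\h(U)$; and, because we are in a harmonic space, $\vx^\vc$ is supported by the compact set $\partial V\subset U$. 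Hence $\vx^\vc\in\J_x(U)\subset\J_x'(X)$, and taking the supremum over $V$ (again replacing $\int^\ast$ by $\int$) yields $M_\vp'\le J_\vp'$.

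To finish, I would note that $\vx=\ve_x\in\J_x'(X)$ forces $\vp(x)=\int\vp\,d\ve_x\le J_\vp'(x)$, so that $\vp\vee M_\vp'\le J_\vp'$. Combining this with the chain above and with the identity $M_\vp=\vp\vee M_\vp'$ of Proposition \ref{M-equiv} (valid precisely because $\vp$ is $\bup$-measurable), one obtains $\vp\vee M_\vp'\le J_\vp'\le J_\vp\le M_\vp=\vp\vee M_\vp'$, which collapses all five quantities to a single value. Theorem \ref{main-theorem} then supplies $M_\vp=N_\vp=\vp\vee\hat N_\vp$, completing the displayed string of equalities. For the final assertion I would invoke $J_\vp=N_\vp$ together with the representation $N_\vp=\vp\vee\sup_{m}\hat R_{\psi_m}$ from (\ref{n-formula}): each $\hat R_{\psi_m}$ is lower semicontinuous, hence Borel, so a Borel $\vp$ makes $N_\vp$, and therefore $J_\vp$, Borel.

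I do not expect a genuinely hard step; the argument is essentially a bookkeeping of inclusions among $\J_x'(X)\subset\J_x(X)\subset\M_x(\W)$ and of the two distinguished members $\ve_x$ and $\vx^\vc$. The one point demanding care, and the place where the harmonic-space assumption is actually used rather than the weaker balayage-space framework of the earlier sections, is the claim $\vx^\vc\in\J_x'(X)$ that harmonic measures are Jensen measures: this needs both the compact support of $\vx^\vc$ on $\partial V$ and the existence of a relatively compact open $U\supset\ov V$ in which $V$ is relatively compact. Everything else reduces to the already-proven Proposition \ref{M-equiv} and Theorem \ref{main-theorem}.
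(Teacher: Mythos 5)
Your proposal is correct and follows exactly the route of the paper, which proves the theorem in one line by observing $M_\vp'\le J_\vp'\le J_\vp\le M_\vp$ and then citing Proposition \ref{M-equiv} and Theorem \ref{main-theorem}; you merely supply the details (harmonic measures are Jensen measures, $\ve_x\in\J_x'(X)$ gives $\vp\le J_\vp'$, and $\bup$-measurability removes the upper-integral issue) that the paper leaves implicit.
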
 

Similarly as in \cite{reduced-jensen} we may now extend this result to functions $\vp$ which
are not necessarily positive. To that end let $\N$ denote the set of all nearly hyperharmonic 
functions on $X$, that is, locally lower bounded functions $w\colon X\to ]-\infty,\infty]$ such that 
$\int^\ast w\,d\vx^\vc \le w(x)$ for all $x\in X$ and relatively compact open neighborhoods $V$ of $x$.
We immediately get  the following generalization of  Proposition \ref{nearly-proper}.

\begin{proposition}\label{nearly-proper-general}
The set $\N$  of all  nearly hyperharmonic functions on $X$ has the following properties:
\begin{itemize} 
\item[\rm (i)] 
$\N$ is a convex cone containing $\h(X)$. 
\item[\rm (ii)] 
For every $u\in\N$, $\hat u=\hat u^f\in\h(X)$.
\item[\rm (iii)] 
If $(u_m)$ is a sequence in $\N$ and $u_m\uparrow u$, then $u\in\N$ and $\hat u_m\uparrow \hat u$.
\item[\rm(iv)] For every subset $\V$ of $\N$ which is locally lower bounded, $\inf \V\in \N$. 
\end{itemize} 
\end{proposition}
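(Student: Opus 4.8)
The plan is to deduce all four assertions from their positive counterparts in Proposition \ref{nearly-proper} by a local shift that turns a locally lower bounded function into a positive one. Two of the statements, however, require no reduction at all, and I would dispose of them first. For (i), if $w\in\h(X)$ then $w$ is lower semicontinuous, hence locally lower bounded and $>-\infty$, and the defining inequality for $\N$ is exactly the super-mean-value property of $w$ (with $\int^\ast w\,d\vx^\vc=\int w\,d\vx^\vc$, since lower semicontinuous functions are universally measurable); thus $\h(X)\subset\N$. That $\N$ is a convex cone follows from the subadditivity and positive homogeneity of the upper integral: for $u,v\in\N$ and $\a,\b\ge 0$ one has $\int^\ast(\a u+\b v)\,d\vx^\vc\le \a\int^\ast u\,d\vx^\vc+\b\int^\ast v\,d\vx^\vc\le (\a u+\b v)(x)$. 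For (iv), if $\V\subset\N$ is locally lower bounded, then $\inf\V$ is locally lower bounded and, for every $v\in\V$ and every admissible $V$, $\int^\ast(\inf\V)\,d\vx^\vc\le\int^\ast v\,d\vx^\vc\le v(x)$; taking the infimum over $v$ gives $\int^\ast(\inf\V)\,d\vx^\vc\le(\inf\V)(x)$. Likewise, the first half of (iii)---that $u\in\N$ when $u_m\uparrow u$---is immediate from the monotone convergence property of upper integrals for increasing sequences bounded below: $\int^\ast u\,d\vx^\vc=\lim_m\int^\ast u_m\,d\vx^\vc\le\lim_m u_m(x)=u(x)$.

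What genuinely needs Proposition \ref{nearly-proper} are the regularization statements: $\hat u=\hat u^f\in\h(X)$ in (ii) and $\hat u_m\uparrow\hat u$ in (iii). Here I would localize and shift. Fix $x_0$ and a relatively compact open neighborhood $V$ of $x_0$; since $u$ (resp.\ $u_1$, which dominates all $u_m$ from below) is locally lower bounded, there is a constant $c\ge 0$ with $u\ge -c$ on $\ov V$. Because constants are harmonic, adding $c$ preserves membership in $\N$ and in $\h$, commutes with both the ordinary and the fine lower semicontinuous regularization, and commutes with monotone limits; moreover the harmonic measures $\ve_x^{(V')^c}$ for $V'$ relatively compact in $V$, as well as the regularizations at points of the open set $V$, are computed identically whether in $X$ or in the restriction of $(X,\W)$ to $V$ (locality; cf.\ \cite{BH}, and Corollary \ref{muAKV-equiv} for the compact-set form of near-hyperharmonicity). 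Consequently $u+c$ restricts to a \emph{positive} nearly hyperharmonic function on $V$, to which Proposition \ref{nearly-proper} applies.

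Applying Proposition \ref{nearly-proper}(ii) on $V$ gives $\widehat{u+c}=(\widehat{u+c})^{f}\in\h(V)$; subtracting the harmonic constant $c$ yields $\hat u=\hat u^f\in\h(V)$, and since $V$ was an arbitrary relatively compact neighborhood and hyperharmonicity is a local (sheaf) property, $\hat u=\hat u^f\in\h(X)$, proving (ii). For the remaining part of (iii), the same shift reduces $u_m\uparrow u$ to $u_m+c\uparrow u+c$ of positive nearly hyperharmonic functions on $V$ (the single constant $c$ works for all $m$, since $u_m\ge u_1\ge -c$ on $\ov V$), and Proposition \ref{nearly-proper}(iii) gives $\widehat{u_m+c}\uparrow\widehat{u+c}$ on $V$, i.e.\ $\hat u_m\uparrow\hat u$ on $V$, hence on $X$.

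The main obstacle I anticipate is precisely the legitimacy of this reduction, and it is two-fold. First, one must shift by a \emph{harmonic} function, so that subtracting it afterwards does not destroy hyperharmonicity (the difference of two hyperharmonic functions need not be hyperharmonic); this is exactly where the standing assumption that constants are harmonic enters, and why local lower boundedness---permitting a finite constant shift on each relatively compact set---is the natural hypothesis. Second, to pass from the positive Proposition \ref{nearly-proper} to the present signed, merely locally lower bounded setting, one needs that near-hyperharmonicity, harmonic measure, and lower semicontinuous regularization are all local notions and that the restriction of $(X,\W)$ to an open set is again a balayage space; once these standard facts are in hand, the globalization of (ii) and (iii) from a neighborhood of each point is routine.
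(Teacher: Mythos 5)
Your proof is essentially sound, but it is not the route the paper takes: the paper offers no argument at all for Proposition \ref{nearly-proper-general}, declaring it an immediate generalization of Proposition \ref{nearly-proper} because the classical proofs (from Bauer and Constantinescu--Cornea, cited for the very definition of near hyperharmonicity) are already carried out for locally lower bounded, not necessarily positive, functions --- positivity plays no role in them. Your direct treatments of (i), (iv) and the first half of (iii) are correct as written (the monotone convergence step uses that $\vx^\vc$ is carried by the compact $\partial V$, on which $u_1$ is bounded below). For (ii) and the second half of (iii) you instead reduce to the positive case by localizing to a relatively compact $V$ and shifting by a constant, and this is where the one real caveat lies: you invoke ``constants are harmonic''. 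That hypothesis is assumed ``for simplicity'' in the abstract, but Section \ref{jensen-appl} of the body works with a general harmonic space and pointedly does \emph{not} assume it --- this is exactly why Corollaries \ref{jensen-reduced-h} and \ref{jensen-reduced-hn} postulate the existence of a harmonic $h$ with $\vp+h\ge 0$ (locally) rather than just adding constants. Without it your shift can fail at both ends: if $1\notin\W$ then $u+c$ need not be nearly hyperharmonic, and even if $1\in\W$ (so that $\vx^\vc(X)\le 1$ and $u+c\in\N$), subtracting $c$ from the hyperharmonic $\widehat{u+c}$ on $V$ need not yield a hyperharmonic function unless $\vx^\vc(X)=1$.

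The repair is routine but must be stated: replace $c$ by $c\,h_0$, where $h_0$ is a strictly positive harmonic function on a neighbourhood of $\ov V$ (such functions exist locally on a harmonic space), and note that multiplication by the continuous $h_0$ still commutes with the two regularizations and with monotone limits. The remaining localization facts you lean on --- that $(V,\h^+(V))$ is again a harmonic space, that harmonic measures for $V'\Subset V$, the fine topology and the lower semicontinuous regularization at points of $V$ are the same computed in $V$ or in $X$, and that $\h$ is a sheaf so that (ii) globalizes --- are standard, and the paper itself uses precisely this localization in the proof of Corollary \ref{jensen-reduced-hn}. In sum: your reduction buys you that nothing about signed nearly hyperharmonic functions has to be re-proved from scratch, at the cost of an auxiliary harmonic minorant; the paper's (implicit) direct route costs a re-reading of the positive proofs but needs no such minorant and hence covers the stated generality without further comment.
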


Extending the definitions of 
$J_\vp$, $J_\vp'$,    and $M_\vp'$ in an obvious way, we get the following.

\begin{corollary}\label{jensen-reduced-h}
Let $\vp$ be a locally lower bounded $\bup$-measurable 
numerical function on $X$ such that
$\vp+h\ge 0$ for some harmonic function $h $ on $X$. 
 Then  
\begin{equation*}
                                  J_\vp=J_\vp'= \vp\vee M_\vp'=N_\vp=\vp\vee \hat N_\vp.
 \end{equation*} 
  In particular, $J_\vp$ is  Borel measurable if $\vp$ is Borel measurable.
\end{corollary}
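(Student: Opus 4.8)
The plan is to reduce everything to the already-established positive case, Theorem \ref{jensen-theorem}, by adding the harmonic function $h$. Put $\psi:=\vp+h$. Since $h$ is harmonic on $X$, hence real-valued, continuous and $\bup$-measurable, the function $\psi$ is again $\bup$-measurable, and by hypothesis $\psi\ge 0$. Thus Theorem \ref{jensen-theorem} applies to $\psi$ and gives
\begin{equation*}
J_\psi=J_\psi'=\psi\vee M_\psi'=M_\psi=N_\psi=\psi\vee\hat N_\psi.
\end{equation*}
The whole point is then to check that each of the operators occurring in the desired conclusion behaves additively with respect to $h$, so that subtracting the finite continuous function $h$ throughout yields the asserted identities for $\vp$.

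Next I would verify the transformation laws. For the two ``measure'' quantities the crucial fact is that $h$ is \emph{fixed}: since $h$ is harmonic, both $h$ and $-h$ lie in $\h(X)$, so every Jensen measure $\mu\in\J_x(X)$ (resp.\ $\mu\in\J_x'(X)$) satisfies $\int h\,d\mu=h(x)$; likewise, because $(X,\W)$ is a harmonic space, the harmonic measures satisfy $\int h\,d\vx^\vc=h(x)$ for all relatively compact open $V\ni x$. As $h$ is continuous and every relevant measure has compact support, $h$ is integrable and the upper integrals split additively, giving $\int^\ast\psi\,d\mu=\int^\ast\vp\,d\mu+h(x)$; taking suprema yields $J_\psi=J_\vp+h$, $J_\psi'=J_\vp'+h$ and $M_\psi'=M_\vp'+h$. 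For the envelope $N$ I would use Proposition \ref{nearly-proper-general}(i): since $\N$ is a convex cone containing $\h(X)$ and $\pm h\in\h(X)$, the map $u\mapsto u+h$ is a bijection of $\{u\in\N:u\ge\vp\}$ onto $\{u\in\N^+:u\ge\psi\}$, whence $N_\psi=N_\vp+h$; and since $h$ is continuous, $\hat N_\psi=\widehat{N_\vp+h}=\hat N_\vp+h$.

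Substituting these identities (together with $\psi=\vp+h$) into the positive-case equalities and cancelling the finite function $h$ gives $J_\vp=J_\vp'=\vp\vee M_\vp'=N_\vp=\vp\vee\hat N_\vp$, as claimed. The Borel-measurability statement is then immediate: if $\vp$ is Borel measurable so is $\psi$, hence $J_\psi$ is Borel by Theorem \ref{jensen-theorem}, and therefore $J_\vp=J_\psi-h$ is Borel. The only genuinely delicate points, which I would treat carefully rather than as routine bookkeeping, are (a) the additive splitting of the upper integrals, which must be justified from local lower boundedness of $\vp$ together with the compact support and continuity of $h$ so that no $\infty-\infty$ arises, and (b) the fact that the positive-case chain also contains $M_\psi$, which I deliberately drop from the conclusion: representing measures in $\M_x(\W)$ only control \emph{positive} hyperharmonic functions, so $\int h\,d\mu=h(x)$ need not hold there and $M_\vp$ does not shift additively --- this is precisely why $M_\vp$ is absent from the statement.
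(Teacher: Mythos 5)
Your proof is correct and follows essentially the same route as the paper, which likewise reduces to Theorem \ref{jensen-theorem} via the substitution $\vp\mapsto\vp+h$ and the identities $J_\vp=J_{\vp+h}-h$, $J_\vp'=J_{\vp+h}'-h$, $M_\vp'=M'_{\vp+h}-h$, $N_\vp=N_{\vp+h}-h$ (which the paper states as ``obvious'' and you justify in detail). Your closing remark about why $M_\vp$ must be dropped from the chain is accurate and consistent with the statement as given.
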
 

\begin{proof}
It suffices to observe that $\vp+h$ is $\bup$-measurable 
and obviously  $    J_\vp=J_{\vp+h}-h$, $    J_\vp'=J_{\vp+h}'-h$, $    M_\vp'=M'_{\vp+h}-h$,  
and $    N_\vp=N_{\vp+h}-h$.
\end{proof} 

Localizing this result we may deal with functions $\vp$ which are locally lower bounded.

\begin{corollary}\label{jensen-reduced-hn}
Let $\vp$ be a locally lower bounded $\bup$-measurable numerical function on $X$ such that,
for every relatively compact open set $U$ in $X$, there exists a~harmonic function $h$ on $X$
with $\vp+h\ge 0$ on $U$ . Then  
\begin{equation*}
                                  J_\vp'= \vp\vee M_\vp'=N_\vp=\vp\vee \hat N_\vp.
 \end{equation*} 
  In particular, $J_\vp'$ is  Borel measurable if $\vp$ is Borel measurable.
\end{corollary}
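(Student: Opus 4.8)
The plan is to reduce the statement to the globally minorised case (Corollary \ref{jensen-reduced-h}) by a localisation along an exhaustion. I would fix an increasing sequence $(U_m)$ of relatively compact open sets with $\ov{U_m}\subset U_{m+1}$ and $\bigcup_m U_m=X$, and choose harmonic functions $h_m$ with $h_m\uparrow\infty$ and $\vp+h_m\ge 0$ on $U_m$ (since constants are harmonic one may take suitable constants $h_m=c_m$; local lower boundedness makes this possible and $\vp>-\infty$ makes the limit work). Put $\chi_m:=\vp\vee(-h_m)$, so that $\chi_m\ge\vp$, $\chi_m=\vp$ on $U_m$, $\chi_m\downarrow\vp$, and $\chi_m+h_m\ge0$; thus Corollary \ref{jensen-reduced-h} applies to each $\chi_m$ and gives $J_{\chi_m}'=\chi_m\vee M_{\chi_m}'=N_{\chi_m}=\chi_m\vee\hat N_{\chi_m}$ together with $J_{\chi_m}=J_{\chi_m}'$. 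The first clean fact is $N_\vp=\inf_m N_{\chi_m}$: indeed $\vp\le\chi_m$ gives $N_\vp\le N_{\chi_m}$, hence $N_\vp\le v:=\inf_m N_{\chi_m}$, while $v\in\N$ by Proposition \ref{nearly-proper-general}(iv) (the family is $\ge\vp$, hence locally lower bounded) and $v\ge\inf_m\chi_m=\vp$, so $v\ge N_\vp$. As each $N_{\chi_m}=\chi_m\vee\hat N_{\chi_m}$ is $\A$-measurable (with $\chi_m$ $\A$-measurable and $\hat N_{\chi_m}\in\W$ Borel), this countable infimum already settles the measurability assertion.

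Next I would establish the sandwich $\vp\vee M_\vp'\le J_\vp'\le N_\vp$. The left inequality is immediate: $\vx\in\J_x(U)$ gives $J_\vp'\ge\vp$, and the harmonic measures $\vx^\vc$ are Jensen measures, so $J_\vp'\ge M_\vp'$. For the right inequality, any $\mu\in\J_x'(X)$ has compact support inside some $U_n$; for $m\ge n$ one then has $\int\vp\,d\mu=\int\chi_m\,d\mu\le J_{\chi_m}'(x)=N_{\chi_m}(x)$, and since $N_{\chi_m}\downarrow N_\vp$ this forces $\int\vp\,d\mu\le N_\vp(x)$, whence $J_\vp'\le N_\vp$.

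The main obstacle is the reverse inequality $N_\vp\le\vp\vee M_\vp'$, which then collapses the sandwich to equality. One cannot argue by direct comparison with $\chi_m$, because $\chi_m\ge\vp$ runs the monotonicity of the majorant $N$ and of the suprema $M',J'$ in the wrong direction; instead I would re-run the positive reduction, localised to where $\chi_m=\vp$. Fix $x$ and $m$ with $x\in U_m$. Theorem \ref{main-theorem} applied to $\chi_m+h_m\ge0$ supplies an increasing sequence $(\psi_j)$ of bounded upper semicontinuous functions with compact support in $\{\chi_m+h_m>0\}$, $0\le\psi_j\le\chi_m+h_m$, such that $N_{\chi_m}=\chi_m\vee\sup_j\bigl(R_{\psi_j}-h_m\bigr)$. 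On the compact support $S$ of each $\psi_j$ one has $\psi_j\le\chi_m+h_m=(\vp+h_m)^+=\vp+h_m$; splitting $S$ by the sign of $\vp$ and using the domination $1_K\vx^\vc\le\vx^K$ exactly as in the proof of Proposition \ref{M-equiv} (together with $\vx^K=\vx$ when $x\in K$), one estimates $R_{\psi_j}(x)=\psi_j(x)\vee M_{\psi_j}'(x)$ and obtains $R_{\psi_j}(x)-h_m(x)\le\vp(x)\vee M_\vp'(x)$. Hence $N_{\chi_m}(x)\le\chi_m(x)\vee\bigl(\vp(x)\vee M_\vp'(x)\bigr)$, and letting $m\to\infty$ (so that $\chi_m(x)=\vp(x)$) yields $N_\vp(x)\le\vp(x)\vee M_\vp'(x)$.

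Finally, for $N_\vp=\vp\vee\hat N_\vp$ the inequality $\ge$ is trivial, and $\le$ follows by the same localisation: the pointwise identity $N_{\chi_m}=\chi_m\vee\hat N_{\chi_m}$ on $U_m$, combined with the estimate $\sup\{\hat N_\psi:\psi\in\Psi_{\chi_m+h_m}\}\le\hat N_{\chi_m+h_m}$ from Theorem \ref{main-theorem} transported through the shift by $h_m$, gives $N_\vp\le\vp\vee\hat N_\vp$ after passing to the limit. Putting the three steps together establishes $J_\vp'=\vp\vee M_\vp'=N_\vp=\vp\vee\hat N_\vp$, and Borel measurability of $J_\vp'$ when $\vp$ is Borel. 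The delicate point throughout, and the step I expect to be hardest to write carefully, is the sign-splitting and the sweeping domination in the crux paragraph, since this is where the positive-case machinery must be recovered locally rather than inherited by monotone comparison.
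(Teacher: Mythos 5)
You have a genuine gap, and it sits exactly where the whole difficulty of the statement lies. Your ``first clean fact'' $N_\vp=\inf_m N_{\chi_m}$ is not proved: the two halves of your argument (``$\vp\le\chi_m$ gives $N_\vp\le N_{\chi_m}$, hence $N_\vp\le v$'' and ``$v\in\N$, $v\ge\vp$, so $v\ge N_\vp$'') both establish the \emph{same} inequality $N_\vp\le v$, since $N_\vp$ is by definition the smallest nearly hyperharmonic majorant of $\vp$. The missing inequality $\inf_m N_{\chi_m}\le N_\vp$ is precisely the hard content: by Corollary \ref{jensen-reduced-h} it amounts to $\inf_m\sup_V\int\chi_m\,d\vx^\vc\le\vp\vee\sup_V\int\vp\,d\vx^\vc$, i.e.\ to interchanging an infimum over $m$ with a supremum over \emph{all} relatively compact $V$. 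Nothing forces the boundary of the optimal $V$ for $\chi_m$ to stay inside $U_m$; outside $U_m$ one has $\chi_m=\vp\vee(-h_m)>\vp$ wherever $\vp<-h_m$, and the gain $\int(\vp+h_m)^-\,d\vx^\vc$ need not be compensated for any fixed $m$. The same defect undermines your crux paragraph: after the reduction you need $R_{\psi_j}(x)-h_m(x)\le\vp(x)\vee M_\vp'(x)$, but the supports of the $\psi_j\in\Psi_{\chi_m+h_m}$ are only contained in $\{\vp+h_m>0\}$, not in $U_m$, and when you estimate $\int\psi_j\,d\vx^\vc-h_m(x)=\int(\psi_j-h_m)\,d\vx^\vc$ the complement of the support contributes $-h_m$, which for a signed harmonic $h_m$ and a subprobability such as $1_K\vx^\vc$ or $\vx^K$ is not controlled; what comes out is only $\le M_{\chi_m}'(x)$, i.e.\ you are back to the unproved interchange. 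Your sandwich step $J_\vp'\le N_\vp$ also relies on $N_{\chi_m}\downarrow N_\vp$, so it inherits the gap. (A smaller point: in Section 4 constants are not assumed harmonic, and the hypothesis only supplies some harmonic $h$ for each $U$, not an increasing sequence $h_m\uparrow\infty$; but this is minor compared to the above.)

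The paper avoids all of this by localizing the \emph{space} rather than the \emph{function}: it applies Corollary \ref{jensen-reduced-h} to the harmonic space $(U_n,\h^+(U_n))$, where the hypothesis gives a global harmonic minorant and, crucially, only sets $V$ with $\ov V\subset U_n$ enter the suprema. The resulting functions $v_n$ (extended by $\vp$ off $U_n$) then \emph{increase} to $N_\vp$, so the limit is handled by Proposition \ref{nearly-proper-general}(iii), where monotone limits behave well --- in contrast to your decreasing family, for which no such stability holds. If you want to salvage your approach you would have to prove $\inf_m M_{\chi_m}'\le\vp\vee M_\vp'$ directly, which I do not see how to do without effectively redoing the localization.
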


\begin{proof} Let $U_n$  be relatively compact open sets in $X$  such that 
$U_n\uparrow X$ as $n\to \infty$. 
For every $n\in \nat$,  we apply Corollary \ref{jensen-reduced-h} to the harmonic space~$(U_n, \h^+(U_n))$ 
and obtain that, for~$x\in U_n$,
\begin{multline*} 
             \sup\{ \int^\ast \vp\,d\mu\colon \mu\in \J_x(U)\}
          =     
  \vp(x)\vee       \sup\{ \int^\ast \vp\,d\vx^\vc\colon x\in V\in \U_c,\ \ov V\subset U\}\\
        =  \inf\{w(x)\colon w\mbox{ nearly hyperharmonic on $U_n$, $w\ge \vp$ on $U_n$}\} = :v_n(x),
\end{multline*} 
where $v_n(x)=\vp(x)\vee \hat v_n(x)$. 
Defining $v_n(x):=\vp(x)$, $x\in X\setminus U_n$, we easily see that the sequence $(v_n)$ is increasing 
to a nearly hyperharmonic function $v$ on $X$, where  $v=N_\vp=\vp\vee \hat N_\vp$,
by Proposition \ref{nearly-proper-general}. The proof is completed letting $n\to \infty$. 
\end{proof}

\begin{remark} {\rm
By Remark \ref{axiom-polarity}, the  results in this Section imply the results in \cite[Section 3]{reduced-jensen}.
}
\end{remark}

\section{Some improvement of the measurability} \label{fine-imp}

Let us now return to the general situation of an arbitrary balayage space $(X,\W)$. 
Sometimes we can say a bit more about the measurability of $N_\vp$ (and hence 
on the measurability of $J_\vp$ in Section \ref{jensen-appl}). 

We  recall that a set $A$ in $X$ is called \emph{thin at $x\in X$} if $\hat \ve_x^A\ne \vx$.
It is \emph{totally thin} if it is thin at every $x\in X$. Every totally thin set is finely closed
and contained in a totally thin Borel set. 
A \emph{semipolar} set is a countable union of totally thin sets. 

So the $\sigma$-algebra $\B^f$  of all finely Borel subsets of $X$  
(that is, the smallest \hbox{$\sigma$-algebra} on $X$ containing all finely open 
sets) contains all semipolar sets. By \cite[VI.5.16]{BH}),  for every $B\in\B^f$, 
there are $B_1,B_2\in \B$ such that $B_1\subset B\subset B_2$ and 
$B_2\setminus B_1$ is semipolar. 
Thus $\B^f$ is the smallest $\sigma$-algebra
containing $\B$ and all semipolar sets. In particular, $\tilde \B\subset \B^f$.

\begin{example}{\rm   
Suppose for the moment that $X=\real^d\times \real$, $d\ge 1$,
and $\W$ is the set of all positive hyperharmonic functions associated 
with the heat equation on~$\real^{d+1}$. Let $S$ be \emph{any} subset of $\real^d\times \{0\}$.
Then $S$ is semipolar;  it is polar if and only if~$S$ has outer $d$-dimensional
Lebesgue measure zero. The function $u:=1_S+1_{\reald\times (0,\infty)}$ is nearly hyperharmonic,
$\B^f$-measurable, and $\{\hat u<u\}=S$.
}\end{example}

\begin{proposition}\label{semipolar-polar}
Let $S\in \bup$ be semipolar. Then there exists  a sequence~$(K_n)$ of compacts in $S$
such that the set $S\setminus \bigcup_{n=1}^\infty K_n$ is polar. In particular, $S\in \tilde \B$.
\end{proposition}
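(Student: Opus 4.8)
The plan is to reduce the statement to a covering/capacity argument using the structure of semipolar sets. Recall that $S$ semipolar means $S=\bigcup_{j} T_j$ with each $T_j$ totally thin, and each totally thin set is finely closed and contained in a totally thin Borel set. So I would first replace $S$ by a Borel semipolar superset when convenient; the genuine content is to capture "almost all" of $S$ by a countable family of compacts, with only a polar remainder.

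First I would fix the strictly positive $u_0\in\W\cap\C(X)$ and use the swept function $\hat R_{u_0}^S$ as the natural capacity-like gauge. The key quantitative fact is that $S$ is polar if and only if $\hat R_{u_0}^S=0$, and more generally the "mass" of $S$ is measured by $\hat R_{u_0}^S$. My aim is to produce compacts $K_n\subset S$ so that the increasing sweepings $\hat R_{u_0}^{K_n}$ exhaust as much as possible. Concretely, I would consider the quantity $\sup\{\hat R_{u_0}^{K}(x_0)\colon K\subset S\text{ compact}\}$ at suitable points, or better work with the measures: for a fixed reference point, use the inner regularity of the reduced/swept measures. Since $S\in\bup$ and (by the remarks following \eqref{int-rep}) $S$ is $\mu$-measurable for every representing measure $\mu\in\M_x(\W)$, I can approximate $S$ from inside by compacts in the relevant measures.

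The heart of the argument is then: choose compacts $K_n\subset S$ with $K_n$ increasing (or pass to finite unions) such that $\hat R_{u_0}^{\bigcup K_n}=\hat R_{u_0}^S$; set $P:=S\setminus\bigcup_n K_n$ and show $\hat R_{u_0}^P=0$, i.e.\ $P$ is polar. Here the semipolarity of $S$ is essential, because for a general set one only gets $\hat R_{u_0}^{\bigcup K_n}=\hat R_{u_0}^S$ up to a \emph{semipolar} discrepancy; the point of semipolarity is precisely that a totally thin set carries no continuous-potential mass in the sense that its sweeping is supported suitably, forcing the remainder to be polar rather than merely semipolar. I would exploit the identity from Section~1 that for $x\in A$, $\hat\ve_x^A=\hat\ve_x^A(\{x\})\vx+(1-\hat\ve_x^A(\{x\}))\vx^{A\setminus\{x\}}$, together with total thinness ($\hat\ve_x^A\neq\vx$), to control the behavior on the remainder.

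The main obstacle I anticipate is showing the remainder $P$ is genuinely \emph{polar} and not just semipolar: an arbitrary increasing exhaustion by compacts only controls sweeping mass up to a semipolar set, so I must use total thinness of the pieces $T_j$ decisively. The cleanest route is probably to treat each totally thin piece separately --- for a totally thin $T$, a single increasing sequence of compacts $K_n^{(j)}\subset T_j$ with $\hat R_{u_0}^{K_n^{(j)}}\uparrow\hat R_{u_0}^{T_j}$ leaves a polar remainder because on a totally thin set the sweeping operation is "inner regular up to polar sets" (this is where the finite-fine-boundary structure of totally thin sets enters) --- and then diagonalize over $j$ and $n$ to get a single sequence $(K_n)$ whose union differs from $S=\bigcup_j T_j$ by a countable union of polar sets, which is polar. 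Once $\bigcup K_n\subset S$ with polar complement is in hand, the final assertion $S\in\tilde\B$ is immediate: $\bigcup_n K_n\in\B$ and $S\triangle\bigcup_n K_n=P$ is polar, so $S$ differs from a Borel set by a polar set, which is exactly the definition of $\tilde\B$.
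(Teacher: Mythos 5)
Your reduction of the last assertion is fine: once compacts $K_n\subset S$ with $S\setminus\bigcup_n K_n$ polar are found, $S\in\tilde\B$ is immediate. But the core of your argument has a genuine gap, and in fact two. First, the step ``choose compacts $K_n\subset S$ such that $\hat R_{u_0}^{\bigcup K_n}=\hat R_{u_0}^S$'' is not justified for $S\in\bu^{\sim}$: Choquet capacitability of $\hat R_{u_0}^{\,\cdot}(x)$ covers analytic sets, while universal measurability only gives inner approximation by compacts \emph{with respect to measures}, not with respect to this capacity (your own heat-equation picture shows why: for a subset of $\real^d\times\{0\}$ of Lebesgue outer measure $1$ and inner measure $0$, every compact subset is polar, so the sup over compacts of $\hat R_{u_0}^K$ can be strictly smaller than $\hat R_{u_0}^S$). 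Second, and more importantly, the decisive claim that for a totally thin $T$ an exhaustion with $\hat R_{u_0}^{K_n}\uparrow\hat R_{u_0}^{T}$ ``leaves a polar remainder because on a totally thin set the sweeping operation is inner regular up to polar sets'' is exactly the proposition you are trying to prove, restricted to totally thin sets; it is asserted, not proved, and the identity $\hat\ve_x^A=\hat\ve_x^A(\{x\})\vx+(1-\hat\ve_x^A(\{x\}))\vx^{A\setminus\{x\}}$ together with $\hat\ve_x^A\ne\vx$ does not yield it. Subadditivity of $\hat R_{u_0}$ gives no control of the remainder from $\hat R_{u_0}^{\bigcup K_n}=\hat R_{u_0}^T$.

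What is actually needed, and what the paper uses, is the nontrivial external input of \cite{S-negligible}: for a semipolar set $S$ there exists a measure $\mu$ on $X$ with $\mu^\ast(B)>0$ for \emph{every} non-polar subset $B$ of $S$. With this in hand the proof is short: since $S\in\bu^{\sim}$, there is $S_0\in\bu$ with $S_0\subset S$ and $S\setminus S_0$ polar; $S_0$ is $\mu$-measurable, so inner regularity of the Radon measure $\mu$ yields compacts $K_n\subset S_0$ with $\mu\bigl(S_0\setminus\bigcup_n K_n\bigr)=0$; and the defining property of $\mu$ converts ``$\mu$-null subset of $S$'' into ``polar''. That measure is precisely the device that upgrades a measure-zero remainder to a polar one, and it is the ingredient your proposal is missing; without it (or an equivalent reproof of it), your argument does not close.
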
 

\begin{proof} By \cite[Theorem 1.5]{S-negligible} (which holds as well for balayage spaces),
there exists a~measure $\mu$ on $X$ such that $\mu^\ast(B)>0$ for every 
subset $B$ of $S$ which is not polar. There exists a subset $S_0\in \bu$
of $S$ such that the set $P_0:=S\setminus S_0$ is polar. Moreover,
there exists a sequence $(K_n)$ of compacts 
in $S_0$ such that $P_1:=S_0\setminus \bigcup_{n=1}^\infty K_n$ is a~$\mu$-null set,
and hence polar. Since $P_0\cup P_1$ is polar, the proof is finished.
\end{proof}

The equivalence in the following proposition is of no interest, 
if we know that  $u=\inf \V$, $\V\subset \W$, 
since then $u$ is obviously finely upper semicontinuous and the set $\{\hat u<u\}$
is semipolar by \cite[VI.5.11]{BH}.

\begin{proposition}\label{nh-semipolar}
For every $u\in\N^+$  the following statements are equivalent:
\begin{itemize}
\item [\rm (i)] The set $\{\hat u<u\}$ is semipolar.
\item [\rm (ii)] The function $u$ is finely Borel measurable.
\end{itemize} 
\end{proposition}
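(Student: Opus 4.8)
The plan is to prove the two implications separately; (i)$\Rightarrow$(ii) is short, and (ii)$\Rightarrow$(i) carries the real content. Throughout I would use that, by Proposition \ref{nearly-proper}(ii), $\hat u=\hat u^f\in\W$, so $\hat u$ is finely continuous (hence finely Borel measurable) and $\hat u\le u$. I would also record once that thinness is monotone in the set, so a subset of a totally thin set is totally thin and a subset of a semipolar set is again semipolar; since every totally thin set is finely closed, every semipolar set, and hence every subset of one, lies in $\B^f$.

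For (i)$\Rightarrow$(ii) I would decompose the level sets. For $t\in\real$,
\begin{equation*}
\{u>t\}=\{\hat u>t\}\cup\bigl(\{u>t\}\cap\{\hat u\le t\}\bigr).
\end{equation*}
The first set is finely open because $\hat u\in\W$ is finely continuous, so it lies in $\B^f$. The second set is contained in $\{\hat u<u\}$, which is semipolar by (i); as a subset of a semipolar set it is semipolar and therefore belongs to $\B^f$. Hence $\{u>t\}\in\B^f$ for every $t$, i.e.\ $u$ is finely Borel measurable.

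For (ii)$\Rightarrow$(i) I would make two reductions before attacking the core. First, replacing $u$ by $u\wedge mu_0\in\N^+$ (Proposition \ref{nearly-proper}(iv)) and using $u\wedge mu_0\uparrow u$ together with $\widehat{u\wedge mu_0}\uparrow\hat u$ (Proposition \ref{nearly-proper}(iii)), one checks $\{\hat u<u\}\subseteq\bigcup_m\{\widehat{u\wedge mu_0}<u\wedge mu_0\}$; a countable union of semipolar sets being semipolar, it suffices to treat bounded $u$. Second, writing $\{\hat u<u\}=\bigcup_{q\in\rat}S_q$ with $S_q:=\{u>q\}\cap\{\hat u<q\}$ (where $\{\hat u<q\}$ is finely open), it suffices to show each finely Borel set $S_q$ is semipolar. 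Since $A\setminus b(A)$ is semipolar for every $A$ (with $b(A)$ the set of points where $A$ is non-thin), and since $S_q\setminus b(S_q)$ is therefore automatically semipolar, the task reduces to controlling the \emph{thick} points $S_q\cap b(S_q)$. The tool I would bring to bear is Corollary \ref{muAKV-equiv}: whenever $E\subseteq\{u\ge q\}$ is thin at a point $x\notin E$, the balayage $\vx^E=\hat\ve_x^E$ is carried by $E$ and near-hyperharmonicity gives $q\,\vx^E(E)\le\int^\ast u\,d\vx^E\le u(x)$ — exactly the inequality that already excludes $u=1_{\ov B}$ at exterior points. The aim is to convert this into the statement that $S_q$ cannot be thick at its own points except on a semipolar set.

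The main obstacle is precisely this last conversion. Near-hyperharmonicity is \emph{vacuous} at a point where the relevant level set is non-thin, since there $\vx^E=\vx$ and the superaveraging inequality collapses to $u(x)\le u(x)$; so there is no purely pointwise contradiction at a thick jump point, and the force of the inequality lies only at the \emph{surrounding} thin points. Turning the heuristic into a proof therefore requires the fine-boundary regularity that is the strongly supermedian / Mertens phenomenon underlying this whole section. Concretely, I would try to approximate $\hat u$ from below by reduced functions of bounded upper semicontinuous minorants $\psi_m\in\Psi_u$, arranged (by approximating the lower semicontinuous $\hat u\in\W$ from below) so that $R_{\psi_m}\uparrow\hat u$ and $\hat R_{\psi_m}\uparrow\hat u$. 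Each $R_{\psi_m}=\inf\{v\in\W:v\ge\psi_m\}$ is an infimum of $\W$, so the jump set $\{R_{\psi_m}>\hat R_{\psi_m}\}$ is semipolar by the finely upper semicontinuous case recorded before the proposition (cf.\ \cite[VI.5.11]{BH}); off the semipolar union $P$ of these sets one has $\hat u=\sup_m R_{\psi_m}$. The remaining and hardest step is to show, using the finely Borel measurability of $u$ crucially (it must fail for non-measurable $u$), that $u\le\sup_m R_{\psi_m}$ off a semipolar enlargement of $P$, which would give $\{\hat u<u\}\subseteq P\cup(\text{semipolar})$ and finish the proof. I expect essentially all the work to be concentrated here.
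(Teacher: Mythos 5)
Your direction (i)$\Rightarrow$(ii) is correct and is essentially the paper's one-line argument. The problem is (ii)$\Rightarrow$(i): you have located the hard step honestly but not closed it, and the tool that closes it in the paper never appears in your plan. The paper argues by contradiction. Since $u$ is finely Borel measurable, there is (by \cite[VI.5.16]{BH}) a semipolar Borel set $S_0$ off which $u$ coincides with a Borel function $u_0$; if $\{\hat u<u\}$ were not semipolar, the \emph{Borel} set $A:=\{\hat u<u\}\setminus S_0$ would not be semipolar, and by \cite[VI.8.9]{BH} there would exist a measure $\mu\neq 0$ carried by $A$ which charges no semipolar set. With this reference measure in hand one chooses $\psi_m\in\Psi_{u_0}$ with $\psi_m\uparrow u_0$ outside a $\mu$-null Borel set $B$ (approximation from below by upper semicontinuous functions is available $\mu$-almost everywhere, but not ``everywhere off a semipolar set''). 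Then $\psi_m\le R_{\psi_m}=N_{\psi_m}\le N_u=u$ by Corollary \ref{main-psi}, the union $S_1$ of the sets $\{\hat R_{\psi_m}<R_{\psi_m}\}$ is semipolar, and $\hat u\ge\sup_m\hat R_{\psi_m}=\sup_m R_{\psi_m}=u$ outside $B\cup S_0\cup S_1$; hence $A\subset B\cup S_1$ and $\mu(A)=0$, a contradiction.

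Two concrete defects in your sketch. First, arranging $\psi_m$ so that $R_{\psi_m}\uparrow\hat u$ by approximating $\hat u$ from below is circular: it leaves you needing $u\le\hat u$ off a semipolar set, which is the conclusion. The minorants must approximate $u$ (or its Borel modification $u_0$) from below, and such an approximation by upper semicontinuous functions cannot be produced off a semipolar set directly --- it requires a measure, and that measure is exactly what the assumed non-semipolarity of $\{\hat u<u\}$ provides via \cite[VI.8.9]{BH}. Second, your reductions (to bounded $u$, to the level sets $S_q$, to the thick part $S_q\cap b(S_q)$) are harmless but do not advance matters: as you yourself note, the balayage inequality is vacuous at thick points, and no pointwise use of near-hyperharmonicity substitutes for the measure-theoretic contradiction. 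As written, the proposal establishes only (i)$\Rightarrow$(ii).
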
 

\begin{proof} (i)\,$\Rightarrow$\,(ii):
For every $t>0$, the set $\{u\ge t\}$ is the union of $\{\hat u\ge t\}\in \B$ and the 
semipolar set $\{u\ge t>\hat u \}$.

 (ii)\,$\Rightarrow$\,(i):
There is a semipolar Borel set $S_0$ such that the function $u_0:=u1_{X\setminus S_0}$
is $\B$-measurable. Suppose that the set $\{\hat u<u\}$ is not semipolar.
Then the Borel set $A:=\{\hat u<u\}\setminus S_0$ is not semipolar. So, by \cite[VI.8.9]{BH},
there is a measure $\mu\ne 0$ on $X$ such that $\mu(X\setminus A)=0$ and $\mu$
does not charge semipolar sets. There exist functions $\psi_m\in \Psi_{u_0}$ such that $\psi_m\uparrow u_0$
outside a $\mu$-null set $B\in \B$.   By Corollary \ref{main-psi},
\begin{equation*} 
               \psi_m\le R_{\psi_m }=N_{\psi_m }\le N_u=u, \qquad m\in\nat.
\end{equation*} 
Hence $\{\sup_{m\in\nat} R_{\psi_m}<u\}\subset B\cup S_0$. Further, 
 the union $S_1$ of all sets $\{\hat R_{\psi_m}<R_{\psi_m}\}$, $m\in\nat$, is semipolar,
and we obtain that
 \begin{equation*}
      \hat u \ge  \sup\nolimits_{m\in\nat} \hat R_{\psi_m}=
 \sup\nolimits_{m\in\nat}  R_{\psi_m}=u \on X\setminus (B\cup S_0\cup S_1).
\end{equation*} 
Thus $A\subset B\cup S_1$ and $\mu(X)=\mu(A)=0$,    a contradiction. 
\end{proof}

\begin{corollary}\label{meas-improv}
If  $\vp\colon X\to [0,\infty]$  is $\B^f\cap \bup$-measurable, then the function~$N_\vp$ is $\bp$-measurable.
\end{corollary}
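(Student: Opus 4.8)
The plan is to reduce everything to the representation $N_\vp=\vp\vee\hat N_\vp$ furnished by Theorem \ref{main-theorem} and then to analyze each superlevel set $\{N_\vp\ge t\}$ separately. First I would collect the ingredients already available. Since $\vp$ is $\bup$-measurable, Theorem \ref{main-theorem} gives $N_\vp=M_\vp=\vp\vee\hat N_\vp$ with $M_\vp$ (hence $N_\vp$) $\bup$-measurable, and Proposition \ref{N-proper} shows $N_\vp\in\N^+$. By Proposition \ref{nearly-proper}(ii) the regularization $\hat N_\vp$ lies in $\W$, so it is lower semicontinuous, hence Borel measurable, and of course $N_\vp\ge\hat N_\vp$ everywhere.

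The crucial preliminary observation is that $N_\vp$ is \emph{finely} Borel measurable. Indeed, $\vp$ is $\B^f$-measurable by hypothesis and $\hat N_\vp$ is $\B$-measurable, hence $\B^f$-measurable, so $N_\vp=\vp\vee\hat N_\vp$ is $\B^f$-measurable. Applying Proposition \ref{nh-semipolar} to $u:=N_\vp\in\N^+$ then yields that the exceptional set $\{\hat N_\vp<N_\vp\}$ is semipolar. This is the real content of the argument: it upgrades the otherwise uncontrolled set on which $N_\vp$ exceeds its regularization to a smallness property that can be fed into Proposition \ref{semipolar-polar}.

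With this in hand I would fix $t>0$ and split
\begin{equation*}
\{N_\vp\ge t\}=\{\hat N_\vp\ge t\}\cup\{N_\vp\ge t>\hat N_\vp\}.
\end{equation*}
The first set is Borel because $\hat N_\vp\in\W$. The second set is contained in $\{\hat N_\vp<N_\vp\}$, hence is itself semipolar (subsets of semipolar sets are semipolar, since thinness is hereditary), and it equals $\{N_\vp\ge t\}\cap\{\hat N_\vp<t\}$, an intersection of a $\bup$-set with a Borel set, hence $\bup$-measurable. By Proposition \ref{semipolar-polar} it therefore belongs to $\tilde\B$. Consequently $\{N_\vp\ge t\}\in\tilde\B$ for every $t>0$ (the cases $t\le 0$ and $t=\infty$ being trivial), so $N_\vp$ is $\tilde\B=\bp$-measurable, as claimed.

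The main obstacle I anticipate is not the level-set bookkeeping but securing the semipolarity of $\{\hat N_\vp<N_\vp\}$. This is precisely the point where the fine-Borel hypothesis on $\vp$, rather than mere $\bup$-measurability, is indispensable, and where Proposition \ref{nh-semipolar} does the essential work; the rest is a routine combination of the cited facts.
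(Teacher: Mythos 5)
Your proof is correct and follows essentially the same route as the paper: establish that $N_\vp$ is finely Borel measurable, invoke Proposition \ref{nh-semipolar} to get semipolarity of $\{\hat N_\vp<N_\vp\}$, decompose each level set as $\{\hat N_\vp\ge t\}\cup\{N_\vp\ge t>\hat N_\vp\}$, and apply Proposition \ref{semipolar-polar}. The only cosmetic difference is that you derive the fine Borel measurability of $N_\vp$ from the identity $N_\vp=\vp\vee\hat N_\vp$, whereas the paper cites the ``as measurable as $\vp$'' clause of Theorem \ref{main-theorem} directly.
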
 

\begin{proof} By Theorem \ref{main-theorem}, the function $u:=N_\vp$ is $ \B^f\cap \bup$-measurable. 
Let $t\in \real$ and $S:=\{u\ge t> \hat u\}$. Then $S\in \bup$ and $S$ is  semipolar, 
by Proposition \ref{nh-semipolar}. So $S\in\bp$, by Proposition \ref{semipolar-polar},
and   $\{u\ge t\}= \{\hat u\ge t\}\cup S\in \bp$.
\end{proof} 

Further, let $\A(X)$ denote the set of all numerical functions $\vp$ on $X$   
having the following  property: For every $t\in \real$, there exists an
analytic set $A$ in $X$ such that the set $\{\vp\ge t\}\bigtriangleup A$
is semipolar. 
 By the discussion preceding Proposition \ref{nh-semipolar}, $\vp\in \A(X)$ for every
 finely Borel measurable function $\vp$.

\begin{proposition}\label{fusc}
Let $\vp$ be a positive function in $\A(X)$ which is $\bup$-measurable. 
Then $N_\vp$ is finely upper semicontinuous and $\bp$-measurable.
\end{proposition}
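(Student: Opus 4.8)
The plan is to run the argument of Proposition \ref{nh-semipolar} one level higher, replacing ``finely Borel'' by ``differs from an analytic set by a semipolar set'', and then to read off both conclusions from the resulting semipolarity of the jump set together with the fine continuity of $\hat N_\vp$. First I would record what Theorem \ref{main-theorem} supplies: $N_\vp=\vp\vee\hat N_\vp$ with $\hat N_\vp\in\W$ (hence finely continuous), and $N_\vp$ is $\bup$-measurable. Moreover $N_\vp\in\A(X)$, since for every $t$ one has $\{N_\vp\ge t\}=\{\vp\ge t\}\cup\{\hat N_\vp\ge t\}$, where $\{\hat N_\vp\ge t\}\in\B$ and $\{\vp\ge t\}$ differs from an analytic set by a semipolar set. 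I then set $S:=\{N_\vp>\hat N_\vp\}$; because $N_\vp=\vp\vee\hat N_\vp$ we have $N_\vp=\vp$ on $S$, and $S=\bigcup_{q\in\rat,\,q>0}(\{N_\vp\ge q\}\cap\{\hat N_\vp<q\})\in\bup$ again differs from an analytic set by a semipolar set.

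Next I would show that $S$ is semipolar, which yields $\bp$-measurability. Suppose not; then the analytic core $A\subset S$ is not semipolar, so by \cite[VI.8.9]{BH} (valid for analytic, hence capacitable, sets) there is a measure $\mu\ne0$ that does not charge semipolar sets, with $\mu(X\sms A)=0$. Since $\mu$ ignores semipolar sets and $\vp\in\A(X)$ is $\bup$-measurable, $\vp$ is $\mu$-measurable, and by inner regularity I can choose an increasing sequence $(\psi_m)$ in $\Psi_\vp$ with $\psi_m\uparrow\vp$ $\mu$-a.e. By Corollary \ref{main-psi}, $\psi_m\le R_{\psi_m}=N_{\psi_m}\le N_\vp$ and $\hat R_{\psi_m}\le\hat N_\vp$, while $S_1:=\bigcup_m\{\hat R_{\psi_m}<R_{\psi_m}\}$ is semipolar, so $\mu(S_1)=0$. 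As $N_\vp=\vp$ on $A$, $\mu$-a.e.\ on $A$ I would then obtain
\begin{equation*}
\vp=\sup\nolimits_m\psi_m\le\sup\nolimits_m R_{\psi_m}=\sup\nolimits_m\hat R_{\psi_m}\le\hat N_\vp<N_\vp=\vp,
\end{equation*}
a contradiction; hence $\mu(A)=\mu(X)=0$, contradicting $\mu\ne0$. So $S$ is semipolar, thus $S\in\bp$ by Proposition \ref{semipolar-polar}, and since $\{N_\vp\ge t\}=\{\hat N_\vp\ge t\}\cup(\{N_\vp\ge t\}\cap S)\in\bp$, the function $N_\vp$ is $\bp$-measurable.

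It remains to prove fine upper semicontinuity, and this is the harder half. As $\hat N_\vp$ is finely continuous and $N_\vp=\vp\vee\hat N_\vp$, one has $\{N_\vp<t\}=\{\hat N_\vp<t\}\sms S_t'$ with $S_t':=\{\vp\ge t\}\cap\{\hat N_\vp<t\}\subset S$, so it suffices to show that $\overline{S_t'}^{\,f}\cap\{\hat N_\vp<t\}\subset S_t'$. Let $x$ lie in this intersection; then $S_t'$ is not thin at $x$, while $N_\vp\ge t$ on $S_t'$. The crucial idea is to sweep not onto the (thick, hence useless) set $S_t'$ itself --- for which $\vx^{S_t'}=\vx$ makes the near-hyperharmonic inequality vacuous --- but onto compacts inside it: since $\vp\in\A(X)$, the set $S_t'$ differs from an analytic set by a semipolar one and is therefore capacitable, so there are compacts $K_n\subset S_t'$ with $R_{u_0}^{K_n}(x)\uparrow\hat R_{u_0}^{S_t'}(x)=u_0(x)$. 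Applying Corollary \ref{muAKV-equiv}(iii) to $N_\vp$ on each $K_n$, together with $N_\vp\ge t$ on $K_n$, I would pass to the limit to conclude $N_\vp(x)\ge t$, whence $x\in S_t'$ and $\{N_\vp<t\}$ is finely open.

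The main obstacle is precisely this last passage to the limit in the balayage-space setting, where constants need not be hyperharmonic. The inequality $\int^\ast N_\vp\,d\vx^{K_n}\le N_\vp(x)$ gives $N_\vp(x)\ge t\,\vx^{K_n}(X)$, and one must argue that the masses $\vx^{K_n}$ concentrate so that $\vx^{K_n}(X)\to1$ as $x$ becomes regular for $\bigcup_n K_n$; I would extract this from the weak$^\ast$-compactness of $\M_x(\W)$ and the convergence $\int u_0\,d\vx^{K_n}\to u_0(x)$, reducing if necessary to $u_0$-normalized level sets. Verifying that capacitability and the regularity $\hat R_{u_0}^{S_t'}(x)=u_0(x)$ survive the semipolar modification of $S_t'$ is the remaining technical point.
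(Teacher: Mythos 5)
Your overall strategy coincides with the paper's: both conclusions are meant to follow from sweeping $N_\vp$ onto well-chosen sets inside a level set of $N_\vp$ that is not thin at the point under consideration, using $N_\vp\in\N^+$ together with the fact that $\{N_\vp\ge t\}$ differs from an analytic set by a semipolar set. Your first half (semipolarity of $\{N_\vp>\hat N_\vp\}$ by rerunning the argument of Proposition \ref{nh-semipolar} with ``finely Borel'' replaced by ``analytic modulo semipolar'') is a workable alternative to the paper's route through Corollary \ref{meas-improv}, modulo the unverified claim that \cite[VI.8.9]{BH} applies to analytic sets. The problem lies in the second half, and you have in effect flagged it yourself.

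There are two genuine gaps there. First, your scheme only sweeps onto \emph{compacts} inside $S_t'$, but $S_t'$ is not analytic: it is analytic only up to a semipolar discrepancy, and non-thinness of $S_t'$ at $x$ may be carried entirely by that semipolar part. The paper splits $A\cap V$ via \cite[VI.4.2]{BH} into an analytic piece (handled by capacitability, \cite[VI.1.10, 1.3.5]{BH}) and a semipolar piece, which it exhausts by an increasing sequence of \emph{totally thin, finely closed} sets $T_m$ and treats with \cite[VI.1.7]{BH}; these $T_m$ are not compacts, so the final sweep is onto a finely closed set $F$, not a compact, and Corollary \ref{muAKV-equiv}(ii) rather than (iii) is what is used. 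Second, and more seriously, your passage to the limit is not a proof: you need $\liminf_n\int^\ast N_\vp\,d\vx^{K_n}\ge t$, and your proposed route via $\vx^{K_n}(X)\to 1$ and weak$^\ast$-compactness does not work as stated (the test class is $\C_{\mathcal P}(X)$, $u_0$ need not be a potential, and $1_{K_m}$ is upper semicontinuous, so weak$^\ast$ limits bound $\mu(K_m)$ from the wrong side). The paper avoids the mass question entirely: fix $\eta\in(0,1)$ with $N_\vp(x)<a\eta^2$, restrict to the neighborhood $V=\{u_0<u_0(x)/\eta\}$, so that $N_\vp\ge a\ge a\eta\, u_0(x)^{-1}u_0$ on $A\cap V$; then for the finely closed $F\subset A\cap V$ with $R_{u_0}^F(x)>\eta u_0(x)$ one gets
\begin{equation*}
N_\vp(x)\ \ge\ \int N_\vp\,d\vx^F\ \ge\ a\eta\, u_0(x)^{-1}\int u_0\,d\vx^F\ =\ a\eta\, u_0(x)^{-1}R_{u_0}^F(x)\ >\ a\eta^2\ >\ N_\vp(x),
\end{equation*}
a contradiction. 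This $\eta$-device (comparing $N_\vp$ with a multiple of $u_0$ on a shrunken neighborhood and integrating $u_0$ instead of $1$) is exactly the missing idea; without it, and without the totally-thin exhaustion of the semipolar part, your argument does not close.
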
 

\begin{proof}  By Theorem \ref{main-theorem}, we know that $u:=N_\vp\in \A(X)$ and 
$u$ is $\bup$-measurable.
Let $x\in X$ and $u(x)<a$. We claim that $\{ u<a\}$ is a fine neighborhood 
of~$x$. Indeed, suppose the contrary.
Then the set $A:=\{u\ge a\}$ is not thin at $x$.
Let~$A'$ be an analytic set such that $A'\subset A$ and   $A\setminus A'$ is
semipolar.  We fix $\eta\in (0,1)$ such that $u(x)<a\eta^2$. Since $V:=\{u_0(x)> \eta u_0\}$
is a neighborhood of $x$, we know, by   \hbox{\cite[VI.4.2]{BH}}, that either the analytic set $A'\cap V $ or
the semipolar set $S:=(A\setminus A')\cap V$ is not thin at $x$.

If $A'\cap V$ is not thin at $x$, then,   by \cite[VI.1.10 and 1.3.5]{BH} ,  there is a compact~$K$ in~$A'\cap V$ such that 
$R_{u_0}^K(x)>\eta u_0(x)$.
By definition of semipolar sets, $S$ is the union of totally thin sets $T_m$, $m\in\nat$.
 By \cite[VI.5.7]{BH}, 
the union of finitely many totally thin sets is totally thin. Hence  we may assume without loss of generality that 
$T_m\uparrow S$ as~$m\to\infty$. If $S$ is not thin at $x$, we then obtain, 
 by \cite[VI.1.7]{BH},  that $R_{u_0}^{T_m}(x)> \eta u_0(x)$ for some $m\in\nat$.

Thus, in any case, there exists a finely closed 
set $F$   such that 
\begin{equation*}
          F\subset A\cap V\in \bup \und                         R_{u_0}^F(x)>\eta u_0(x).
\end{equation*} 
 Since $u\ge a> a \eta u_0(x)\inv  u_0$ on~$A\cap V$ 
and $\vx^F(X\setminus (A\cap V))=0$, by \cite[VI.4.6]{BH}, 
we conclude that
\begin{equation*} 
u(x)\ge \int u\,d\vx^F\ge a\eta u_0(x)\inv  \int  u_0\,d\vx^F
=  a \eta u_0(x)\inv  R_{u_0}^F(x)\ge  a \eta^2  > u(x),
\end{equation*} 
a contradiction. By Corollary \ref{meas-improv}, the proof is finished.
\end{proof} 

\begin{corollary}\label{A-B} Let $\vp\colon X\to [0,\infty]$ be such that, for every $t>0$,
there exists an analytic set $A$ in $X$ such that the set $\{\vp\ge t\}\bigtriangleup A$ is polar.
Then $N_\vp$ is $\bp$-measurable.
\end{corollary}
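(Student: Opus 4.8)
The plan is to deduce the statement directly from Proposition \ref{fusc}, whose conclusion is exactly that $N_\vp$ is $\bp$-measurable (and in fact finely upper semicontinuous). Proposition \ref{fusc} carries two hypotheses on $\vp$: that it be $\bup$-measurable and that it belong to $\A(X)$. Thus the entire task reduces to checking that the present, stronger ``polar'' assumption (in place of ``semipolar'') forces both of these. In other words, I would present the corollary as the special case of Proposition \ref{fusc} in which the separately imposed $\bup$-measurability comes for free.

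First I would establish the $\bup$-measurability. Fix $t>0$ and choose an analytic set $A$ with $\{\vp\ge t\}\bigtriangleup A$ polar. Since $X$ is a locally compact space with countable base, every analytic subset of $X$ is universally measurable, so $A\in\bu$; as the symmetric difference $\{\vp\ge t\}\bigtriangleup A$ is polar, the defining property of $\bup$ yields $\{\vp\ge t\}\in\bup$. For $t\le 0$ one has $\{\vp\ge t\}=X\in\B$, because $\vp\ge 0$. Hence every superlevel set of $\vp$ lies in $\bup$, i.e.\ $\vp$ is $\bup$-measurable.

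Next I would verify $\vp\in\A(X)$, for which I only need that a polar set is semipolar: if $P$ is polar then $\hat R_{u_0}^P=0$, whence $\int u_0\,d\hat\ve_x^P=0$ and, $u_0$ being strictly positive, $\hat\ve_x^P=0\ne\vx$ at every $x$; thus $P$ is totally thin, hence semipolar. Consequently each polar symmetric difference $\{\vp\ge t\}\bigtriangleup A$ is in particular semipolar, which is precisely the defining requirement for membership in $\A(X)$. With both hypotheses in hand, Proposition \ref{fusc} applies and gives that $N_\vp$ is $\bp$-measurable. I do not expect a genuine obstacle here: the single non-bookkeeping ingredient is the classical measurability of analytic sets, which is exactly what turns the ``polar'' hypothesis into $\bup$-membership; everything else is an unwinding of the definitions of polar, semipolar, $\bup$ and $\A(X)$.
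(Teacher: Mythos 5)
Your proposal is correct and is precisely the argument the paper intends (it prints no separate proof, treating the corollary as an immediate consequence of Proposition \ref{fusc}): analytic sets are universally measurable, so the polar hypothesis gives $\{\vp\ge t\}\in\bup$ for $t>0$ (and trivially for $t\le 0$), and since polar sets are totally thin, hence semipolar, $\vp\in\A(X)$, so Proposition \ref{fusc} applies.
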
 


{\small \noindent 
Wolfhard Hansen,
Fakult\"at f\"ur Mathematik,
Universit\"at Bielefeld,
33501 Bielefeld, Germany, e-mail:
 hansen$@$math.uni-bielefeld.de}\\
{\small \noindent Ivan Netuka,
Charles University,
Faculty of Mathematics and Physics,
Mathematical Institute,
 Sokolovsk\'a 83,
 186 75 Praha 8, Czech Republic, email:
netuka@karlin.mff.cuni.cz}


\begin{thebibliography}{1}

\bibitem{bauer66}
 H.\,Bauer, 
\newblock{\em {Harmonische {R}\"aume und ihre {P}otentialtheorie}}.
\newblock Lecture Notes in Math.~22.
 \newblock {Springer}, Berlin, 1966.

\bibitem{beznea-boboc-feyel}
L.\,Beznea and N.\,Boboc.
\newblock Feyel's techniques on the supermedian functionals and strongly
  supermedian functions.
\newblock {\em Potential Anal.}, 10(4):347--372, 1999.

\bibitem{beznea-boboc-book}
L.\,Beznea and N.\,Boboc.
\newblock {\em Potential theory and right processes}, volume 572 of {\em
  Mathematics and its Applications}.
\newblock Kluwer Academic Publishers, Dordrecht, 2004.


\bibitem{BH}
J.\,Bliedtner and W.\,Hansen.
\newblock {\em {Potential Theory -- An Analytic and Probabilistic Approach to
  Balayage}}.
\newblock Universitext. Springer, Berlin-Heidelberg-New York-Tokyo, 1986.

\bibitem{cole-ransford-subharmonicity}
B.J.\, Cole and T.J.\, Ransford.
\newblock Subharmonicity without semicontinuity.
\newblock {\em J. Funct. Anal.}, 147:420--442, 1997.

\bibitem{Const}
C.\,Constantinescu and A.\,Cornea.
\newblock {\em {Potential Theory on Harmonic Spaces}}.
\newblock {Grundlehren d. math. Wiss.} Springer, Berlin - Heidelberg - New
  York, 1972.

\bibitem{feyel-rep}
D.\,Feyel.
\newblock Repr\'esentation des fonctionnelles surm\'edianes.
\newblock {\em Z. Wahrsch. Verw. Gebiete}, 58(2):183--198, 1981.

\bibitem{feyel-fine}
D.\,Feyel.
\newblock Sur la th\'eorie fine du potentiel.
\newblock {\em Bull. Soc. Math. France}, 111(1):41--57, 1983.

\bibitem{S-negligible}
W.\,Hansen.
\newblock{Semi-polar sets are almost negligible}.
\newblock{\em J. reine angew. Math.}, 314:217--220, 1980.

\bibitem{H-course}
W.\,Hansen.
\newblock{\em Three views on potential theory}.
\newblock A course at Charles University (Prague), Spring 2008.
\newblock http://www.karlin.mff.cuni.cz/~hansen/lecture/ course-07012009.pdf. 

\bibitem{HN-square}
W.\,Hansen and N.\,Netuka.
\newblock Harmonic measures for a point may form a~square.
\newblock{\em Adv. Math.}, 225:1830--1839, 2010.

\bibitem{HN-jensen}
W.\,Hansen and I.\,Netuka.
\newblock Jensen measures in potential theory.
\newblock {\em Potential Anal.}, 37:79--90, 2012.


\bibitem{reduced-jensen}
W.\,Hansen and I.\,Netuka.
\newblock Reduced functions and Jensen measures. 
\newblock
DOI: https://doi.org/10.1090/proc/13688.

\bibitem{mertens}
J.-F.\,Mertens.
\newblock Strongly supermedian functions and optimal stopping.
\newblock{\em Z. Wahrscheinlichkeitstheorie verw. Geb.}, 26:119--139, 1973.

\bibitem{moko-bourbaki}
G.\,Mokobodzki.
\newblock Structure des c\^ones de potentiels.
\newblock{\em S\'eminaire N.\,Bourbaki}, 1969--1970, exp. n$^o$377, p.~239--252.



\bibitem{moko-stresa}
G.\,Mokobodzki.
\newblock C\^ones de potentiels et noyaux subordonn\'es.
\newblock In {\em Potential Theory (C.\,I.\,M.\,E., I Ciclo, Stresa)},
pp.\, 207--248. Edizioni Cremonese, Rome, 1970.

\bibitem{moko-ens-compacts}
G.\,Mokobodzki.
\newblock Ensembles compacts de fonctions fortement surm\'edianes.
\newblock In {\em S\'eminaire de {T}h\'eorie du {P}otentiel, {N}o. 4 ({P}aris,
  1977/1978)},   {Lecture Notes in Math.} 713, pp.\, 178--193.
  Springer, Berlin, 1979.

\end{thebibliography}
\end{document}